\DeclareMathOperator{\HH}{H}
\DeclareMathOperator{\Ima}{Im}
\newtheorem{thm}{Theorem}[section]
\newtheorem{lem}[thm]{Lemma}
\newtheorem{prop}[thm]{Proposition}
\newtheorem{defn}[thm]{Definition}
\newtheorem{rem}[thm]{Remark} 
\newtheorem{exm}[thm]{Example}
\numberwithin{equation}{section}
\newcommand{\CC}{\mathbb C}
\newcommand{\PP}{\mathbb {P}}
\newcommand{\CA}{\mathbb{A}}
\newcommand{\QQ}{\mathbb{Q}}
\newcommand{\PPP}{\mathcal{P}}
\newcommand{\tP}{\tilde{P}}
\newcommand{\fg}{\mathfrak{g}}
\newcommand{\fp}{\mathfrak{p}}
\newcommand{\ft}{\mathfrak{t}}
\newcommand{\fb}{\mathfrak{b}}
\newcommand{\fsl}{\mathfrak{sl}}
\newcommand{\al}{\alpha}
\newcommand{\OX}{\mathcal{O}}
\newcommand{\tY}{\tilde{Y}}
\newcommand{\cN}{\mathcal N}
\newcommand{\cY}{\mathcal Y }
\renewcommand{\gg}{\mathfrak{g}}
\newcommand{\gs}{\mathfrak{s}}
\author{An Huang, Bong Lian, Shing-Tung Yau, Chenglong Yu}
\title{Jacobian rings for homogenous vector bundles and applications}
\begin{document}
\maketitle
\begin{abstract}
In this note, we examine the Jacobian ring description of the Hodge structure of zero loci of vector bundle sections on a class of ambient varieties. We consider a set of cohomological vanishing conditions that imply such a description, and we verify these conditions for some new cases. We also observe that the method can be directly extended to log homogeneous varieties. We apply the Jacobian ring to study the null varieties of period integrals and their derivatives, generalizing a result in \cite{CHL17} for projective spaces. As an additional application, we prove the Hodge conjecture for very generic hypersurfaces in certain generalized flag varieties.
\end{abstract}

\tableofcontents 
\baselineskip=16pt plus 1pt minus 1pt
\parskip=\baselineskip
\section{Introduction}
In \cite{Gr}, Griffiths introduced a Jacobian ring description for Hodge structures of hypersurfaces in a projective space. This was later generalized to hypersurfaces of sufficiently high degree by Green \cite{green}. Green's description involves the first prolongation bundle. For complete intersections or more general vector bundle sections, this was also studied by Flenner in connection to the local Torelli problem \cite{flenner} using spectral sequence argument and Koszul resolution. See \cite{terasoma} or \cite{konno} for details of Jacobian ring in terms of the first prolongation bundle for complete intersections or vector bundle sections. In \cite{Batyrevcox}, Batyrev and Cox obtained similar descriptions for toric hypersurfaces. It is generalized to complete intersections in toric varieties by Mavlyutov \cite{Mavlyutov}. The corresponding Jacobian ring is explicit in the sense that it only depends on the combinatorial data associated to a toric variety. 

In this note, we study the Jacobian rings for varieties given by zero loci of vector bundles sections from the point of tautological systems. The latter is general class of D-modules introduced earlier by the second and third authors \cite{LY}. We apply some of the arguments in earlier works to study the Hodge structures of this class of varieties. In the case of homogenous vector bundle on flag varieties, the Jacobian ring can be explicitly given in terms of representations of $G$. See Theorems \ref{main} and \ref{VJM}. Some of these descriptions may be well-known to experts. For example, in the case of hypersurfaces in irreducible Hermitian symmetric spaces, the description was given by Saito in \cite{saito}. We also show similar results for hypersurfaces in log homogenous varieties following Batyrev's work on toric hypersurfaces \cite{batyrev}. In section \ref{application}, we apply our results to study tautological systems -- differential systems associated to the period integrals of those families. In section \ref{Hodgeconjecture}, we prove Hodge conjecture for very generic hypersurfaces in certain generalized flag varieties.


We first fix some notations.
\begin{enumerate}
\item Let $G$ be a complex Lie group and $\fg=Lie(G)$
\item Let $X^n$ be a smooth projective variety together with action of $G$.
\item Let $E^r$ be a $G$-equivariant vector bundle on $X$ with rank $r$.
\item Assume $V^\vee=H^0(X, E)$ has basis $a_1, \cdots, a_N$ and dual basis $a_i^\vee$.
\item Let $f\in V^\vee$ be an section and $Y_f$ the zero locus of $f$. We further assume $Y_f$ is smooth with codimension $r$. See the discussion about smoothness in section \ref{Vector}.
\end{enumerate}

\subsection{Acknowledgment} The authors are grateful to Joe Harris and Colleen Robles for their interests and helpful discussions. 

\section{Line bundles}
\label{linebundles}
In this section, we consider the case that $E$ is an line bundle $L$. The Hodge structure of $Y_f$ is determined by ambient space $X$ by Lefschetz hyperplane theorem except the middle dimension. Let $U_f=X-Y_f$ and $H^{n-1}_{var}(Y)$ is the kernel of Gysin morphism $H^{n-1}(Y)\to H^{n+1}(X)$. Hodge structures of $Y$ and $U$ are related by the Gysin sequence
\begin{equation}
0\to H^n_{prim}(X)\to H^n(U)\to H^{n-1}_{var}(Y)(-1)\to 0
\end{equation}
Here $H^n_{prim}(X)$ is the cokernel of the Gysin morphism $H^{n-2}(Y)\to H^n(X)$. We will consider $X$ to be a Fano variety. So $H^n(X, \OX)=0$ and $F^0H^n(U)=F^1H^n(U)$.
\begin{defn}
\label{maindef}
Let $R$ be the graded ring $R=\oplus_{k\geq 0} H^0(X, L^k)$. The generalized Jacobian ideal $J$ is the graded ideal generated by $f, L_{Z} f$ for $Z\in \fg$. Here the Lie derivative $L_Z f$ is from the natural $\fg$-action on $H^0(X, L)$. Then $M=\oplus_{k\geq 0} H^0(X, K_X\otimes L^{k+1})$ is a graded $R$-module. 
\end{defn}
This definition gives Green's Jacobian ring \cite{green} under suitable vanishing conditions, see Proposition \ref{compare}. We have the following proposition.
\begin{prop}
\label{map}
There is an natural morphism from $$(M/JM)^k\to F^{n-k}H^{n}(U)/F^{n-k+1}H^{n}(U).$$It is compatible with multiplication map $H^0(X, L)\times (M/JM)^k\to (M/JM)^{k+1}$ and the Higgs field from Gauss-Manin connection
\end{prop}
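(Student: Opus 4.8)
The plan is to construct the map via the algebraic de Rham description of $H^n(U)$ together with the Hodge filtration by pole order. Since $X$ is Fano and $U = X \setminus Y_f$ is the complement of an ample divisor, $H^n(U)$ is computed by the complex of algebraic forms on $X$ with poles along $Y_f$; concretely, a global section $\omega \in H^0(X, K_X \otimes L^{k+1})$, when divided by $f^{k+1}$, gives a rational $n$-form with a pole of order $k+1$ along $Y_f$, hence a class in $H^n(U)$ landing in the $k+1$-st piece of the pole filtration. The first step is to recall (or set up) the comparison between the pole-order filtration and the Hodge filtration $F^\bullet H^n(U)$: for $X$ Fano one has $F^0 = F^1$, and the subspace of classes represented by forms with pole order $\le k+1$ maps onto $F^{n-k}H^n(U)$. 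This sends $\omega \in M^k = H^0(X, K_X \otimes L^{k+1})$ to a class $[\omega/f^{k+1}]$, and the assignment $\omega \mapsto [\omega/f^{k+1}]$ is the desired map at the level of $M^k$.

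Next I would check that this descends to $M/JM$. The point is the Griffiths-type reduction of pole order: if $\omega = f \cdot \eta$ for $\eta \in H^0(X, K_X \otimes L^k)$, then $\omega/f^{k+1} = \eta/f^k$ already has lower pole order, so it lies in $F^{n-k+1}$ and dies in the associated graded; this handles the $f M$ part of $JM$. For the part generated by the $L_Z f$: the key computation is that for $Z \in \fg$ the rational form $(L_Z f)\,\omega / f^{k+1}$ differs, modulo exact forms and modulo forms of lower pole order, from a contraction/Lie-derivative expression involving the vector field generated by $Z$ on $X$. Explicitly, one uses the Cartan formula $L_{\xi_Z} = d\,\iota_{\xi_Z} + \iota_{\xi_Z} d$ applied to the meromorphic $n$-form $\omega/f^k$ (viewing $\xi_Z$ as the vector field on $X$ induced by $Z$, which preserves $X$ and whose action on $f$ is $L_Z f$): the $d$-exact term is killed in de Rham cohomology, and the remaining terms show that $(L_Z f)\omega/f^{k+1}$ is cohomologous to something of strictly lower pole order. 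Hence the image in $\mathrm{Gr}^{n-k}_F$ vanishes, so the map factors through $(M/JM)^k$.

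Finally, the compatibilities. Multiplication $H^0(X,L) \times (M/JM)^k \to (M/JM)^{k+1}$ corresponds under $\omega \mapsto [\omega/f^{k+1}]$ to multiplying the numerator by $g \in H^0(X,L)$ and raising the pole order by one, i.e. $\omega/f^{k+1} \mapsto g\omega/f^{k+2}$; this is exactly the cup product action refined by the pole filtration, and matching it with the Higgs field (the graded pieces of the Gauss–Manin connection) is the standard Griffiths transversality statement — the connection $\nabla$ moves $F^p$ into $F^{p-1}\otimes \Omega^1$, and on associated graded it is $\OX$-linear and agrees with this multiplication. I would verify this by differentiating $\omega/f^{k+1}$ in the parameter direction $\dot f = g$: one gets $-(k+1)\,g\,\omega/f^{k+2}$ plus lower-order terms, which on $\mathrm{Gr}_F$ is (up to the harmless scalar) the multiplication map, establishing compatibility with the Higgs field.

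The main obstacle I anticipate is making the pole-order reduction precise in this generality — i.e., that on a Fano $X$ the pole filtration on $H^n(U)$ genuinely computes the Hodge filtration and that Griffiths reduction of pole order works without a projective-space ambient. This requires the vanishing $H^q(X, \Omega^p_X(kY_f)) = 0$ for appropriate $p,q,k$ (a Bott–Danilov–Steenbrink–type vanishing), which for flag varieties and log homogeneous varieties is available; I would invoke exactly those cohomological vanishing hypotheses the paper has flagged in Section \ref{Vector}, and note that the construction of the map in Proposition \ref{map} needs only the comparison and reduction statements, with the harder surjectivity/injectivity postponed to the later theorems.
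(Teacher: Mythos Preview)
Your proposal is correct and follows the same route as the paper: the map $\omega\mapsto[\omega/f^{k+1}]$, Cartan's formula applied to $\iota_{\xi_Z}(T/f^k)$ to show that $(L_Zf)\cdot M^{k-1}$ drops pole order (hence dies in the graded piece), and differentiation in the family parameter for the Higgs compatibility; the paper's only extra device is to pass to a $G$-equivariant principal bundle $\mathcal{P}$ carrying a universal $n$-form $\Omega$ (and a character $\beta$), so that $f$ and $T$ become honest functions and the Leibniz-type expansion you sketch is literally valid. One small correction: the existence of the map $\alpha_k\colon H^0(X,\Omega_X^n((k+1)Y))\to F^{n-k}H^n(U)$ requires no vanishing hypotheses (this is Proposition~\ref{localcal}, quoted from Voisin), so your anticipated obstacle does not arise here --- conditions \eqref{C} and \eqref{C1} enter only in Theorem~\ref{main} to promote the map to an isomorphism.
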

This is a direct result from Griffiths' study of rational forms \cite{Gr}.
\begin{prop}
\label{localcal}
There is a natural map
\begin{equation}
\alpha_k\colon
{H^0(X, \Omega_X^n((k+1)Y))} \to F^{n-k}H^n(U)
\end{equation}
with $d H^0(X, \Omega_X^{n-1}(kY))$ contained in the kernel.
\end{prop}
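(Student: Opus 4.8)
The plan is to construct $\alpha_k$ via Griffiths' theory of rational differential forms. Let $j\colon U\hookrightarrow X$ be the inclusion. By Grothendieck's algebraic de Rham theorem, $H^n(U;\CC)$ is computed as the hypercohomology $\HH^n\bigl(X,\Omega_X^\bullet(\ast Y)\bigr)$ of the complex of algebraic forms with arbitrary poles along $Y$, where $\Omega_X^q(\ast Y)=\varinjlim_m\Omega_X^q(mY)$. A section $\omega\in H^0\bigl(X,\Omega_X^n((k+1)Y)\bigr)$ sits in top degree of this complex, hence is automatically a hypercocycle, and I define $\alpha_k(\omega)\in H^n(U)$ to be its class. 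Equivalently, when $L$ is ample $U$ is affine, $H^n(U)$ is the cokernel of $d\colon\Gamma(U,\Omega_U^{n-1})\to\Gamma(U,\Omega_U^n)$, and $\alpha_k(\omega)=[\omega|_U]$ for the holomorphic top-degree (hence closed) form $\omega|_U$. The assignment is manifestly $\CC$-linear and canonical, requiring no choices.

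The substantive point is that $\alpha_k(\omega)$ lies in $F^{n-k}H^n(U)$, i.e. that the order-of-pole filtration $P_kH^n(U):=\Ima\bigl(H^0(X,\Omega_X^n((k+1)Y))\to H^n(U)\bigr)$ is contained in the Hodge step $F^{n-k}H^n(U)$. Here one uses that the mixed Hodge structure on $H^n(U)$ is computed by the logarithmic de Rham complex, so that $F^pH^n(U)=\Ima\bigl(\HH^n(X,\sigma_{\geq p}\Omega_X^\bullet(\log Y))\to H^n(U)\bigr)$. For $k=0$ one has $\Omega_X^n((k+1)Y)=\Omega_X^n(Y)=\Omega_X^n(\log Y)$ because $Y$ is smooth, so $\alpha_0(\omega)$ already lies in the image of $\HH^n(X,\sigma_{\geq n}\Omega_X^\bullet(\log Y))$, namely in $F^nH^n(U)$. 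For $k\geq1$ one argues by induction on the pole order using Griffiths' local reduction: near a smooth point of $Y=\{z_1=0\}$ a top-degree form with a pole of order $k+1$ equals $d$ of a form with pole order $k$ plus a form with pole order $k$; assembling these local reductions through the \v{C}ech--de Rham bicomplex realizing $\Omega_X^\bullet(\log Y)$ shows that $\alpha_k(\omega)$ is represented by a hypercocycle in $\sigma_{\geq n-k}\Omega_X^\bullet(\log Y)$, which is precisely the assertion $\alpha_k(\omega)\in F^{n-k}H^n(U)$. This is the content of \cite{Gr} in the projective-space case, extended to the present ambient variety by the standard mixed Hodge theory of open varieties, and it is where essentially all the work of the proposition resides.

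Finally, the kernel assertion is immediate. If $\eta\in H^0\bigl(X,\Omega_X^{n-1}(kY)\bigr)$ then $d\eta$ has a pole of order at most $k+1$ along $Y$, so $d\eta\in H^0\bigl(X,\Omega_X^n((k+1)Y)\bigr)$; and $\alpha_k(d\eta)=[d\eta|_U]=0$ in $H^n(U)$ since $d\eta|_U$ is exact. Hence $dH^0(X,\Omega_X^{n-1}(kY))\subseteq\ker\alpha_k$. The only genuinely non-formal ingredient is the pole-order-versus-Hodge-filtration comparison of the second paragraph; the remaining steps are bookkeeping with the de Rham complex.
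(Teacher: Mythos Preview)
Your proposal is correct. The paper does not give an independent proof of this proposition; it simply records the statement and, in the proof of the next proposition, refers the reader to \cite{Voisin}, Theorem 6.5 for the construction of $\alpha_k$. What you have written is precisely the standard Griffiths--Deligne argument underlying that reference: realize $H^n(U)$ as hypercohomology of $\Omega_X^\bullet(\ast Y)$, send a top form to its class, and compare the pole-order filtration with the Hodge filtration via the logarithmic complex and local pole reduction. So your approach is the same as the one the paper invokes, only spelled out rather than cited.
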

\begin{proof}[Proof of Proposition \ref{map}]
See \cite{Voisin}, Theorem 6.5 for the construction of $\alpha_k$. Let $\bar{\alpha}_k$ be the induced map to the quotient $F^{n-k}H^{n}(U)/F^{n-k+1}H^{n}(U)$. Now we prove that $JM^{k-1}\subset \ker \bar{\alpha}_k$. The elements of $M^k$ can be realized as rational forms on $X$ as follows. Consider a G-equivariant principal bundle $\PPP$ over $X$ such that $L$ and $K_X$ are associated bundles. Then $f\in H^0(X, L)$ and $\tP\in H^0(X, K_X\otimes L^k)$ are viewed as functions on $\PPP$. There exists an $n$-form $\Omega$ defined on $\PPP$ such that $\Omega \tP\over f^{k+1}$ is the pull-back of a rational $n$-form on $X$. See \cite{LY} for the construction of $\Omega$ from principal bundle version of adjunction formula. We identify $\Omega^{n-1}$ with $T_X\otimes K_X$ and consider the morphism $\fg\otimes H^0(X, K_X\otimes L^{k})\to H^0(X,T_X\otimes L^{k}\otimes K_X)\cong H^0(X,\Omega_X^{n-1}(kL)) $ given by 
\begin{equation}
\sum_i Z_i\otimes T_i\mapsto \sum_i\iota_{Z_i} {{\Omega T_i}\over f^k}
\end{equation}
Here $Z_i$ is a basis of $\fg$, $T_i\in H^0(X, K_X\otimes L^{k})$ and $\iota$ is the contraction between vector fields with forms. Let $\gamma\in H^0(X, \Omega^{n-1}(kY))$ be such a differential form, then
\begin{equation}
\label{gamma}
d\gamma=d \sum_i\iota_{Z_i} {{\Omega T_i}\over f^k}=\sum_i L_{Z_i} {{\Omega T_i}\over f^k}
\end{equation}
by Cartan's formula. Assume $\tP$ satisfies 
 \begin{equation}
{\Omega  (\tP-fQ)\over f^{k+1}}=d\gamma
\end{equation}
for some $Q\in H^0(X, \Omega^n(kY))$. Then we have
\begin{align}
{\Omega  (\tP-fQ)\over f^{k+1}}&=\sum L_{Z_i} {\Omega T_i\over f^{k}}\\
&=\sum_i {{\Omega }}({(\beta(Z_i)T_i+L_{Z_i} T_i)f-kT_iL_{Z_i}f\over f^{k+1}})
\end{align}
Here the $G$-action lifts to $\PPP$ and the corresponding character on the $n$-form $\Omega$ is denoted by $\beta$. So $\tP=(Q+\sum_i\beta(Z_i)T_i+L_{Z_i} T_i)f-k\sum_iT_iL_{Z_i} f$. 
Since such $\tP\in \ker \bar{\alpha}_k$ and $Q$ can be any section in $H^0(X, K_X\otimes L^k)$, we have $JM^{k-1}\subset \ker \bar{\alpha}_k$. Hence this induces a map $(M/JM)^k\to F^{n-k}H^{n}(U)/F^{n-k+1}H^{n}(U)$. 

The Gauss-Manin connection on the universal family is given by differentiating $\Omega P\over f^{k+1}$.
\end{proof}

Now we discuss the case $X$ being homogenous and sufficient conditions for the above map being isomorphism. Let $X=G/P$ be a generalized flag variety. We consider the following two vanishing conditions for the line bundle $L$. 
\begin{equation}
\label{C}
H^p(X, \Omega_X^q\otimes L^l)=0 \text{ with $p>0, q\geq 0$, $l\geq 1$}  
\end{equation}
\begin{equation}
\label{C1}
H^1(X, (G\times_{ad{P}}\fp)\otimes L^{k}\otimes K_X)=0 \text{ for } k\geq 1
\end{equation}
We postpone the discussion about the conditions to next section.

\begin{thm}
\label{main}
Let $0\leq k\leq n-1$ and assume $L$ satiesfies conditions \eqref{C1} and \eqref{C} for $(p,q,l)$ in the following range $\{1\leq p\leq k, q=n-p, l=k-p+1\}\cup\{1\leq p\leq k-1, q=n-p-1, l=k-p\}\cup\{1\leq p\leq k-1, q=n-p, l=k-p\}$, then the map $$(M/JM)^k\to F^{n-k}H^{n}(U)/F^{n-k+1}H^{n}(U)$$ is an isomorphism.
\end{thm}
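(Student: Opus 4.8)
The plan is to compute both sides via the same spectral sequence coming from Griffiths' pole-order filtration on the de Rham complex of $U = X \setminus Y$, and to show that the vanishing hypotheses force the sequence to degenerate in the relevant range. Concretely, I would start from the resolution of $\Omega^p_U$ by the complex of sheaves $\Omega^p_X(*Y)$ with its pole-order subsheaves $\Omega^p_X(jY)$, so that $F^{n-k}H^n(U)$ is computed by the subcomplex with pole order $\le k+1$ in top degree, and the graded piece $F^{n-k}H^n(U)/F^{n-k+1}H^n(U)$ is the cohomology of the associated graded complex $\bigl[\Omega^{n-k}_X((k-?)Y) \to \cdots \to \Omega^n_X((k+1)Y)\bigr]$ at the last spot, modulo the image of $d$ from $\Omega^{n-1}_X(kY)$. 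Here I am using Proposition~\ref{localcal}: $\alpha_k$ is surjective onto $F^{n-k}H^n(U)$ with $d H^0(\Omega^{n-1}_X(kY))$ in the kernel, so the target of our map is a quotient of $H^0(X,\Omega^n_X((k+1)Y))$, and by the residue/adjunction identification $\Omega^n_X((k+1)Y) \cong K_X \otimes L^{k+1}$ twisted appropriately this is exactly $M^k$ up to the Jacobian relations already shown to lie in the kernel in Proposition~\ref{map}.

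Next I would make the comparison precise. The map $(M/JM)^k \to F^{n-k}H^n(U)/F^{n-k+1}H^n(U)$ is already constructed and known to be well-defined (Proposition~\ref{map}), and surjectivity is essentially Proposition~\ref{localcal} combined with the vanishing $H^p(X,\Omega^q_X \otimes L^l)=0$ needed to kill the ``higher pole order is not needed'' obstruction — i.e.\ condition~\eqref{C} for the triples with $q = n-p$, $l = k-p+1$ guarantees that every class in $F^{n-k}H^n(U)$ is represented by a form with pole order exactly $k+1$, hence comes from $H^0(\Omega^n_X((k+1)Y)) = M^k$. For injectivity, I would run the hypercohomology spectral sequence for the truncated complex: a class in $M^k$ mapping to zero means the corresponding top-degree form is exact in the pole-order-$(k+1)$ complex, which by the vanishing of $H^p(X,\Omega^{n-p}_X \otimes L^{k-p+1})$ and $H^p(X,\Omega^{n-p-1}_X\otimes L^{k-p})$ and $H^p(X,\Omega^{n-p}_X\otimes L^{k-p})$ for $1 \le p \le k$ (resp.\ $k-1$) — precisely the three families in the hypothesis — forces the class to be $d$ of a section of $\Omega^{n-1}_X(kY)$ of the special ``Koszul/Lie-derivative'' shape, i.e.\ to lie in $JM^{k-1}$. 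Condition~\eqref{C1} enters here: it identifies the image of $d$ on such $(n-1)$-forms with the span of $\{L_Z f \cdot (\text{stuff}), f \cdot (\text{stuff})\}$, because $H^1(X,(G\times_{ad P}\fp)\otimes L^k \otimes K_X)=0$ says the map $\fg \otimes H^0(K_X\otimes L^k) \to H^0(\Omega^{n-1}_X(kL))$ has cokernel controlled only by the $\fp$-part, which is exactly the infinitesimal automorphism correction appearing in~\eqref{gamma}.

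So the skeleton is: (1) write $F^{n-k}H^n(U)$ via the pole filtration, reducing to cohomology of a finite complex of twisted sheaves on $X$; (2) use the $q=n-p$, $l=k-p+1$ vanishings to show surjectivity onto $M^k$; (3) use the diagonal and sub-diagonal vanishings ($q = n-p-1$, $l=k-p$ and $q=n-p$, $l = k-p$) to collapse the spectral sequence so that exactness of a top form is detected one step down by an $(n-1)$-form; (4) use~\eqref{C1} to pin down that such $(n-1)$-forms produce exactly the generalized Jacobian relations $f, L_Z f$, i.e.\ the kernel is $JM^{k-1}$; (5) conclude $(M/JM)^k \xrightarrow{\sim} \mathrm{gr}_F^{n-k} H^n(U)$.

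The main obstacle will be Step~(4): translating the abstract spectral-sequence statement ``the class dies in the next filtration step'' into the concrete assertion that the relevant $(n-1)$-form lies in the subspace spanned by contractions $\iota_{Z_i}(\Omega T_i / f^k)$ with $Z_i \in \fg$ — i.e.\ showing there is no ``extra'' kernel beyond $JM$. This requires carefully tracking the Koszul differential on the associated graded of the pole-order complex and matching it, via the principal-bundle description of $\Omega$ and the adjunction identification $\Omega^{n-1}_X \cong T_X \otimes K_X$, with the $\fg$-action map; condition~\eqref{C1} is exactly the cohomological input that makes this matching surjective onto the full space of exact $(n-1)$-forms modulo lower pole order. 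The other steps are formal diagram chases once the vanishing inputs are in place, following the pattern of Green \cite{green}, Flenner \cite{flenner}, and Konno \cite{konno}; the novelty is bookkeeping which twists $L^l$ actually occur, which is why the hypothesis lists a specific finite set of triples $(p,q,l)$ rather than an all-$l$ condition.
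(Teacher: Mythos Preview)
Your proposal is correct and follows essentially the same approach as the paper. The paper's proof is a terse version of exactly what you outline: it invokes Voisin's Theorem 6.5 (the pole-order filtration argument) to get that $\alpha_k\colon H^0(X,\Omega_X^n((k+1)Y))/dH^0(X,\Omega_X^{n-1}(kY))\to F^{n-k}H^n(U)$ is an isomorphism under the first two families of vanishing, then uses condition~\eqref{C1} on the sequence $0\to\fp\to\fg\to\fg/\fp\to 0$ to get surjectivity of $\fg\otimes H^0(K_X\otimes L^k)\to H^0(\Omega_X^{n-1}(kY))$, so every $(n-1)$-form is a sum of contractions $\iota_{Z_i}(\Omega T_i/f^k)$ and hence $d$ of it lands in $JM^{k-1}$ by the computation in Proposition~\ref{map}; finally the third family gives surjectivity of $\alpha_{k-1}$, pinning down $\ker\bar\alpha_k = JM^{k-1}$.

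One small clarification on your step~(4): condition~\eqref{C1} makes the $\fg$-map surjective onto \emph{all} of $H^0(\Omega_X^{n-1}(kY))$, not just onto exact forms --- the vanishing of $H^1$ of the $\fp$-bundle twist kills the cokernel outright. The identification of $d(\text{such forms})$ with $JM^{k-1}$ is then the explicit Cartan-formula calculation already done in the proof of Proposition~\ref{map}, so your ``main obstacle'' is in fact already handled there and need not be redone.
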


\begin{proof}
The proof follows the argument in Theorem 6.5 \cite{Voisin}. Under condition (\ref{C}) with $\{1\leq p\leq k, q=n-p, l=k-p+1\}\cup\{1\leq p\leq k-1, q=n-p-1, l=k-p\}$, the map
\begin{equation}
\alpha_k\colon
{H^0(X, \Omega_X^n((k+1)Y))} /d H^0(X, \Omega_X^{n-1}(kY))\to F^{n-k}H^n(U)
\end{equation}
is an isomorphism for $1\leq k\leq n-1$. Consider the exact sequence
\begin{equation}
0\to \fp\to \fg\to \fg/\fp \to 0.
\end{equation}
According to condition (\ref{C1}), the short exact sequence of vector bundles induces a surjective morphism $\fg\otimes H^0(X, K_X\otimes L^{k})\to H^0(X,\Omega_X^{n-1}(kL))$. So any $\gamma \in H^0(X,\Omega_X^{n-1}(kL))$ has the form in \eqref{gamma}. The map $\alpha_{k-1}\colon H^0(X, \Omega_X^n(kY)) \to F^{n-k+1}H^n(U)$ is surjective under condition \eqref{C} for $1\leq p\leq k-1, q=n-p, l=k-p$. So the kernel of $\bar{\alpha}_k$ is given by $JM^{k-1}$. 
\end{proof}

\begin{rem}
If $L$ and $K_X$ are multiples of the same ample line bundle $L^\prime$, then $R$ and $M$ can be embedded in the coordinate ring $R^\prime=\oplus H^k(X, (L^\prime)^k)$. We can define the Jacobian ideal $J^\prime$ to be the ideal generated by $f, L_Z f$ in $R^\prime$. Then the degree-$k$ summand $(M/JM)^k$ is the corresponding summand in $R^\prime/J$. When $X=\PP^n$, we can take $L^\prime$ to be $\OX(1)$ bundle. Let $L=\OX(d)$. The vanishing conditions are satisfied except $kd=n$ for condition \ref{C1}. When $kd\neq n$, this is the same Jabocian ring description for Hodge structures of hypersurfaces in projective spaces. More specifically, the Jacobian ideal defined here are generated by $x_j{\partial f\over \partial x_i}$ if we view $f$ as polynomial of homogenous coordinates $[x_0,\cdots, x_n]$. The usual Jacobian ideal are generated by ${\partial f\over \partial x_i}$. When $kd\neq n$, the corresponding degree part in the usual Jacobian ring are quotients of elements in the form $\sum_i g_i {\partial f\over \partial x_i}$ with $g_i$ homogenous with degree greater than 0, which are the same degree part of ideal generated by $x_j{\partial f\over \partial x_i}$.
\end{rem}

\begin{rem}
For hypersurfaces in irreducible Hermitian symmetric spaces, the Jacobian ring defined here is already given by Saito in \cite{saito}. See Lemma 4.1.12 \cite{saito}. The vanishing conditions required here is slightly weaker than the ones used in \cite{saito}. This is because the approach in Theorem 6.5 \cite{Voisin} used the exactness of log de-Rham complex in degree $\geq 2$ with smooth $Y$. So the cohomology of closed log forms $H^{k}(X, \Omega_X^{n-k,c}(\log Y))$ computes the hypercohomology of $$0\to \Omega_X^{n-k}(\log Y)\to \Omega_X^{n-k+1}(\log Y)\to \cdots.$$ On the other hand, $\Omega_X^{n-k,c}(\log Y)\cong \Omega_X^{n-k,c}(Y)$ has a resolution 
$$ 0\to \Omega_X^{n-k}(Y)\to  \Omega_X^{n-k+1}(2Y)\to \cdots .$$
The proof in \cite{saito} used the exact sequence
\begin{eqnarray*}
0\to \Omega_{X}^{n-k}(\log Y)\to \Omega_{X}^{n-k}(Y)\xrightarrow{d} \Omega_{X}^{n-k+1}(2Y)/\Omega_{X}^{n-k+1}(Y)\xrightarrow{d}  \\ \cdots \xrightarrow{d}  \Omega_{X}^{n}((k+1)Y)/\Omega_{X}^{n}(kY)\to 0
\end{eqnarray*}
The vanishing $$H^{k-l}(X, \Omega_{X}^{n-k+l}((l+1)Y)/\Omega_{X}^{n-k+l}(lY))=0$$ and $$H^{k-l-1}(X, \Omega_{X}^{n-k+l}((l+1)Y)/\Omega_{X}^{n-k+l}(lY))=0$$ requires more vanishing conditions \eqref{C}. See Section \ref{log} for the application of this exact sequence in the log homogenous case, which follows the same argument for hypersurfaces in algebraic tori \cite{batyrev}.
\end{rem}

\begin{rem}
The Kodaira-Spencer map $H^0(X, L)\to H^1(Y, T_Y)$ has kernel equal to $J$ if $H^1(X, T_X\otimes L^{-1})=0$, and is surjective if $H^2(X, T_X\otimes L^{-1})=0$. In this case, the multiplication map $H^0(X, L)/J\times (M/JM)^k\to (M/JM)^{k+1}$ gives the Higgs field in universal deformation family of $Y$. For example this holds for Grassmannians $G(a, b)$ with $b\geq 5$ with any ample line bundle $L$.
\end{rem}

\begin{rem}
In order to get similar description as $\PP^n$ and coordinate ring $\CC[a_1,\cdots a_{n+1}]$, we consider $M$ to be the total space of $H$-principal bundle over $X$ with $G$-equivariant action. The character $\chi\colon H\to \CC^*$ is associated with the line bundle $L$. In many cases, the total space $M$ is embedded in affine space $\bar{M}$ as Zariski open set and global sections of structure sheaf of $M$ is extended to $\bar{M}$. Assume that the $G$-action also extends to $\bar{M}$. For example, when $X=G(a,b)$ is the Grassmannian, we can take $M$ to be the Stiefel manifold and $\bar{M}$ the affine space $\CA^{ab}$. The coordinate ring $R$ is identified with $\CC[\bar{M}]^{H,\chi}$, which is the functions that are equivariant under the $H$ action by characters $m\chi$. The basis can be given by standard monomials. Then the sections $f$ and $L_Z f$are identified with elements in $\CC[\bar{M}]$ similar as the $\PP^n$ case.
\end{rem}

Now we discuss the relation to Green's Jacobian ring \cite{green}. See Saito's identification of two definitions for Hermitian symmetric spaces, Lemma 3.2.3 \cite{saito}. First we recall Green's definition of Jacobian ring. Let $\Sigma_L$ be the first prolongation bundle. It is the bundle of first order differential operators on $L$. The differentiation of $f$ gives a section $df\in H^0(X, \Sigma_L^*\otimes L)$. This induces a map $H^0(X, L^k\otimes K_X\otimes \Sigma_L)\to H^0(X, L^{k+1}\otimes K_X)$. The Jacobian ring $R_k$ is the cokernel of this map.
\begin{prop}
\label{compare}
If $L$ is ample and satisfies condition \ref{C1}, then $(M/JM)^k\cong R_k$.
\end{prop}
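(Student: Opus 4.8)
The plan is to realize the comparison as a quotient map coming directly from the two definitions, and then to use \eqref{C1} for injectivity. First I would unwind Green's side. The first prolongation bundle sits in the Atiyah-type exact sequence $0\to\OX_X\to\Sigma_L\to T_X\to 0$ (order-zero operators, then symbol), so dually $0\to\Omega_X^1\otimes L\to\Sigma_L^*\otimes L\to L\to 0$, and the section $df\in H^0(X,\Sigma_L^*\otimes L)$ maps to $f$ in the quotient $L$. Tensoring the first sequence by $L^k\otimes K_X$ and contracting against $df$, Green's map $\delta_k\colon H^0(X,\Sigma_L\otimes L^k\otimes K_X)\to H^0(X,L^{k+1}\otimes K_X)$ restricts, on the subsheaf $\OX_X\otimes L^k\otimes K_X\cong L^k\otimes K_X$, to multiplication by $f$, and descends, on the quotient, to the contraction $H^0(X,T_X\otimes L^k\otimes K_X)\to H^0(X,L^{k+1}\otimes K_X)/f\,H^0(X,L^k\otimes K_X)$ sending a vector field to its directional derivative of $f$ (well defined modulo $f$). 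Hence $R_k=H^0(X,L^{k+1}\otimes K_X)/\Ima(\delta_k)$ with $\Ima(\delta_k)=f\,H^0(X,L^k\otimes K_X)+\langle df,W_k\rangle$, where $W_k:=\Ima\bigl(H^0(X,\Sigma_L\otimes L^k\otimes K_X)\to H^0(X,T_X\otimes L^k\otimes K_X)\bigr)$ and $\langle df,W_k\rangle$ is read modulo $f\,H^0(X,L^k\otimes K_X)$.

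Next I would produce the map $(M/JM)^k\to R_k$. Because $G$ acts equivariantly on $L$, the infinitesimal action is a Lie algebra homomorphism $\rho\colon\fg\to H^0(X,\Sigma_L)$ whose symbol is the infinitesimal action $\bar\rho\colon\fg\to H^0(X,T_X)$, and $\rho(Z)(f)=L_Zf$ by definition. Therefore, for $T\in M^{k-1}=H^0(X,K_X\otimes L^k)$, the element $(L_Zf)\,T\in M^k$ equals $\delta_k(\rho(Z)\cdot T)$, while $f\,T=\delta_k(T)$ via the subsheaf $\OX_X\otimes L^k\otimes K_X$. Thus $(JM)^k=f\,M^{k-1}+\sum_Z(L_Zf)M^{k-1}\subseteq\Ima(\delta_k)$, which gives a natural surjection $(M/JM)^k\twoheadrightarrow R_k$. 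Moreover, modulo $f\,M^{k-1}$ the denominator $(JM)^k$ is exactly $\langle df,\overline W_k\rangle$, where $\overline W_k:=\Ima\bigl(\bar\rho\otimes\mathrm{id}\colon\fg\otimes H^0(X,K_X\otimes L^k)\to H^0(X,T_X\otimes L^k\otimes K_X)\bigr)$; here one uses that $L_Zf-\langle df,\bar\rho(Z)\rangle$ is multiplication by a global function on $X$, hence a constant, and that $\bar\rho(Z)\cdot T$ is the symbol of $\rho(Z)\cdot T$, so $\overline W_k\subseteq W_k$.

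It remains to prove injectivity, which reduces to $\langle df,W_k\rangle\subseteq\langle df,\overline W_k\rangle+f\,M^{k-1}$, for which it suffices to show $\overline W_k=H^0(X,T_X\otimes L^k\otimes K_X)$, i.e. that $\bar\rho\otimes\mathrm{id}$ is surjective. This is where \eqref{C1} is used. On $X=G/P$ the adjoint bundle $G\times_P\fg$ is trivial, being trivialized by $[g,\xi]\mapsto(gP,\mathrm{Ad}_g\xi)$, so $G\times_P\fg\cong\OX_X\otimes\fg$, while $G\times_P(\fg/\fp)\cong T_X$; this gives the exact sequence of vector bundles $0\to(G\times_{ad{P}}\fp)\to\OX_X\otimes\fg\to T_X\to 0$. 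Twisting by $L^k\otimes K_X$ and passing to cohomology, the cokernel of $\fg\otimes H^0(X,K_X\otimes L^k)\to H^0(X,T_X\otimes L^k\otimes K_X)$ embeds into $H^1\bigl(X,(G\times_{ad{P}}\fp)\otimes L^k\otimes K_X\bigr)$, which vanishes for $k\ge 1$ by \eqref{C1}. Hence $\bar\rho\otimes\mathrm{id}$ is surjective, $W_k\subseteq\overline W_k$, and $(M/JM)^k\cong R_k$ for $k\ge 1$; the case $k=0$ is immediate, since $M^{-1}=0$ and, for a flag variety of dimension $\ge 2$, $H^0(X,K_X)=H^0(X,T_X\otimes K_X)=0$, so both sides equal $H^0(X,K_X\otimes L)$. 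The only real work is the bookkeeping of the first two paragraphs — matching $\delta_k$ with ``multiply by $f$'' plus ``contract with $df$'' and checking that the Lie-derivative generators exhaust, modulo $f$, the image of $\delta_k$; there is no deeper geometric ingredient, and \eqref{C1} is the one indispensable hypothesis (ampleness of $L$ only places us in the standard setting for Green's ring).
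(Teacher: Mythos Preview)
Your proof is correct and follows the same route as the paper: both use the Atiyah sequence $0\to\OX_X\to\Sigma_L\to T_X\to 0$ twisted by $L^k\otimes K_X$, identify $\Ima(\delta_k)$ modulo $fM^{k-1}$ with the contraction of $df$ against the symbol image in $H^0(X,T_X\otimes L^k\otimes K_X)$, and then invoke \eqref{C1} to show that $\fg\otimes M^{k-1}$ already surjects onto that space. Your aside that ``$L_Zf-\langle df,\bar\rho(Z)\rangle$ is a constant'' is imprecise (that pairing is not globally defined without a splitting of the Atiyah sequence), but the fact you actually need --- that $\rho(Z)\otimes T$ has symbol $\bar\rho(Z)\otimes T$, so $\overline W_k\subseteq W_k$ --- you state correctly and is all the argument requires.
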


\begin{proof}
The proof is the same as Lemma 3.2.3 \cite{saito}. Consider the exact sequence 
$$
0\to O_X\to \Sigma_L\to T_X\to 0
$$
twisted by $L^k\otimes K_X$. We have $$0\to H^0(X, L^k\otimes K_X)\to H^0(X, L^k\otimes K_X\otimes \Sigma_L)\to H^0(X, L^k\otimes K_X\otimes T_X)\to 0.$$
Since $\fg\otimes H^0(X, L^k\otimes K_X)\to H^0(X, L^k\otimes K_X\otimes T_X)$ is surjective, similar calculation as Proposition \ref{localcal} shows that the image of $H^0(X, L^k\otimes K_X\otimes \Sigma_L)\to H^0(X, L^{k+1}\otimes K_X)$ is $JM^{k-1}$.
\end{proof}

\section{Vanishing conditions}
In this section we discuss the vanishing conditions, especially when $Y$ is Calabi-Yau. First we discuss condition \eqref{C1} when $L=-K_X$.

For $l=1$, the vanishing condition \eqref{C1} is equivalent to $H^0(X, T_X)=\fg$. When $G$ is simple, there are three series of exceptional cases that this condition fails. See \cite{Laction}, Chapter 3.3, Theorem 2.

For $l\geq 2$, we prove that vanishing condition \eqref{C1} holds for any $X=G/P$ with $L=-K_X$.
\begin{prop}
\label{c1}
If $L=-K_X$, then we have $H^i(X, (G\times_{ad{P}}\fp)\otimes L^{k})=0 \text{ for } i\geq 1, k\geq 1$
\end{prop}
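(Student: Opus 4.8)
My plan is to reduce the statement to irreducible homogeneous bundles and then apply the Bott--Borel--Weil theorem. Write $\fp=\gl\oplus\gn$ with $\gl$ a Levi subalgebra and $\gn$ the nilradical, and let $\Delta(\gn)$ be the set of roots occurring in $\gn$. The lower central series of $\gn$ gives a chain of $P$-submodules $\fp\supseteq\gn\supseteq[\gn,\gn]\supseteq\cdots$ whose successive subquotients are annihilated by $\gn$, hence are inflated from $\gl$-modules; splitting these into $\gl$-irreducibles refines the chain to a filtration of the $P$-module $\fp$ with irreducible $\gl$-modules $W_1,\dots,W_m$ as graded pieces. Applying $G\times_P(-)$, the bundle $G\times_{ad P}\fp$ acquires a filtration by homogeneous subbundles whose quotients are the irreducible homogeneous bundles $\cE_a=G\times_P W_a$. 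Each weight of $W_a$ is a weight of $\fp$, so lies in $\{0\}\cup\Delta(\gl)\cup\Delta(\gn)$; in particular the $\gl$-dominant weight $\lambda_a$ attached to $\cE_a$ (the negative of the lowest weight of $W_a$) is either $0$ or a root.

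By the long exact cohomology sequences of this filtration, it suffices to prove $H^i(X,\cE_a\otimes L^k)=0$ for every $i\geq1$, every $k\geq1$, and every $a$, where $L=-K_X$. In Bott's parametrization $L$ corresponds to the dominant weight $2\rho_\gn:=\sum_{\al\in\Delta(\gn)}\al$ (so that $H^0(X,-K_X)$ is the irreducible $G$-module of highest weight $2\rho_\gn$), and hence $\cE_a\otimes L^k$ is the irreducible homogeneous bundle corresponding to $\lambda_a+2k\rho_\gn$. By Bott's theorem, the cohomology of $\cE_a\otimes L^k$ vanishes in all degrees if $\lambda_a+2k\rho_\gn+\rho$ is singular, and is otherwise concentrated in degree $\ell(w)$, where $w$ is the element of the Weyl group of $\fg$ with $w(\lambda_a+2k\rho_\gn+\rho)$ dominant. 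So I only need $\lambda_a+2k\rho_\gn+\rho$ to be dominant or singular, i.e. $\langle\lambda_a+2k\rho_\gn+\rho,\al^\vee\rangle\geq0$ for every simple root $\al$.

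I verify this in two cases. If $\al$ lies in the Levi, then $s_\al$ fixes $2\rho_\gn$ (the Weyl group of $\gl$ permutes $\Delta(\gn)$), so $\langle2\rho_\gn,\al^\vee\rangle=0$; combined with $\langle\lambda_a,\al^\vee\rangle\geq0$ ($\gl$-dominance) and $\langle\rho,\al^\vee\rangle=1$, the pairing is $\geq1$. If $\al$ is not in the Levi, write $2\rho_\gn=2\rho-2\rho_\gl$; each positive root of $\gl$ is a nonnegative combination of simple roots different from $\al$, so $\langle2\rho_\gl,\al^\vee\rangle\leq0$, and therefore $\langle2\rho_\gn,\al^\vee\rangle=2-\langle2\rho_\gl,\al^\vee\rangle\geq2$; since $\lambda_a$ is $0$ or a root, $\langle\lambda_a,\al^\vee\rangle\geq-3$, and the pairing is $\geq-3+2k+1=2k-2\geq0$ for $k\geq1$. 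Hence $\lambda_a+2k\rho_\gn+\rho$ is always dominant or singular, so $H^{>0}(X,\cE_a\otimes L^k)=0$, and the proposition follows.

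The part I expect to take care is not any single computation but the Bott--Borel--Weil bookkeeping for the non-irreducible $P$-module $\fp$, and fixing the normalizations so that the weight actually entering the theorem is the $\gl$-dominant $\lambda_a$; I would also stress that the estimate is sharp at $k=1$ (the pairing can be $0$, i.e. the weight can land on a wall), which matches the failure of the statement at $k=0$. As a consistency check, the vanishing in degrees $i\geq2$ can be obtained without Bott's theorem: twisting the Euler-type sequence $0\to G\times_{ad P}\fp\to\fg\otimes\OX\to T_X\to0$ by $L^k$ and using $H^{>0}(X,L^k)=0$ (Kodaira vanishing), the isomorphism $T_X\otimes L^k\cong\Omega_X^{n-1}\otimes L^{k+1}$, and condition \eqref{C} reduces those degrees to standard inputs, so only $H^1$ genuinely needs the argument above.
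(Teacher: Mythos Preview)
Your proof is correct and follows essentially the same route as the paper's: filter $\fp$ by irreducible $P$-modules, apply Borel--Weil--Bott to each graded piece, and verify that the shifted weight $\lambda+2k\rho_P+\rho$ is dominant by treating Levi and non-Levi simple roots separately (using $\gl$-dominance for the former and the bounds $\langle\lambda,\alpha^\vee\rangle\geq -3$, $\langle 2\rho_P,\alpha^\vee\rangle\geq 2$ for the latter). The only differences are cosmetic---your choice of Borel and your parametrization $\lambda_a=-(\text{lowest weight})$ versus the paper's use of the highest weight with $B_-\subset P$---and your closing consistency-check paragraph, which is a nice sanity check but note that it invokes condition~\eqref{C}, an independent hypothesis not established by this proposition.
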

\begin{proof}
Without loss of generality, we assume $G$ is simple. We first fix some notations. Let $\Phi_+$ be the set of positive roots for $\fg$ and $B_-\subset P$ be the corresponding negative Borel subgroup. Let $\fp=\ft\oplus \bigoplus_{\alpha\in \Phi_-\cup \Phi_P} {\fg_\alpha}$, where $\Phi_P$ is the set of positive roots generated by $\{\alpha_1,\cdots, \alpha_l\}\backslash \{\alpha_{j_1}, \cdots \alpha_{j_m}\}$. The pairing on $\ft^\vee$ induced by the Killing form is denoted by $(,)$. The pairing $\langle, \rangle$ is defined by $\langle \alpha, \beta\rangle=2{(\alpha, \beta)\over (\beta,\beta)}$. Consider the Jordan-H\"older $\fp$-representation filtration of $\fp=V_0\supset V_1\supset\cdots \supset V_t$ with irreducible factors $W_i=V_i/V_{i+1}$. The corresponding associated bundles over $X$ are also denoted by $V_i$ and $W_i$.
Then the highest weight of $W_i$ as maximal semisimple Lie subalgebra $\fp^{ss}\subset \fp$, denoted by $\lambda_i$, is either $0$ or in $\Phi_-\cup \Phi_P$. The weight for $L$ is $2\rho_P=\sum_{\alpha\in \Phi_+\backslash\Phi_P}\alpha$. 

Now we prove all the higher cohomology groups of $W_i$ vanish. The complement of $\{\alpha_{j_1}, \cdots ,\alpha_{j_m}\}$ in the Dynkin diagram is decomposed as connected components $D_1, D_2, \cdots, D_a$. The semisimple part $\fp^{ss}$ is the direct sum of Lie subalgebras corresponding to $D_j$. Then $\lambda_i$ restricted to the Cartan of $\fp^{ss}$ is dominant weight for $\fp^{ss}$. So $\langle \lambda_i,\beta\rangle \geq 0$ for all $\beta\in \Phi_P$. The paring used here is induced by the Killing form on $\fp^{ss}$. Since Killing form is the unique bilinear pairing invariant under adjoint action up to rescaling, we use the same notation. We claim $\langle \lambda_i+2k\rho_P+\rho,\beta\rangle \geq 0$ for any $\beta\in \Phi_+$. For simple root $\beta$, we have $\langle \rho_P, \beta\rangle\geq 0$ and $\langle \rho, \beta\rangle=1$. If $\beta\in \{\alpha_1,\cdots, \alpha_l\}\backslash \{\alpha_{j_1}, \cdots \alpha_{j_m}\}$, then $\langle \lambda_i, \beta\rangle\geq 0$. If $\beta=\alpha_{j_b}$, then $\langle \lambda_i, \beta\rangle\geq -3$. In this case, we have $\langle \rho_P, \alpha_{j_b}\rangle\geq 1$ since $\langle 2\rho-2\rho_P, \alpha_{j_b}\rangle \leq 0$. Here $2\rho-2\rho_P$ is the sum of positive roots in $D_j$, hence has nonpositive product with $\alpha_{j_b}$.
\end{proof}

The above method also proves the vanishing condition \eqref{C1} for irreducible Hermitian symmetric spaces.
\begin{prop}
\label{c11}
Then condition \ref{C1} holds for any ample line bundle on irreducible Hermitian symmetric space $X$ that is not isomorphic to projective spaces. Here $G$ is $Aut(X)$.
\end{prop}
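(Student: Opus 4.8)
The plan is to adapt the weight-theoretic argument of Proposition \ref{c1} to the case of an arbitrary ample line bundle $L$ on an irreducible Hermitian symmetric space $X = G/P$, with $G = \mathrm{Aut}(X)$ simple and $X \not\cong \mathbb P^n$. Recall that condition \eqref{C1} asks for the vanishing of $H^1(X, (G\times_{ad P}\mathfrak p)\otimes L^k \otimes K_X)$ for $k \ge 1$, and Proposition \ref{localcal}'s machinery requires it only to produce surjectivity onto $H^0(X, \Omega_X^{n-1}(kL))$. Since $X$ is Hermitian symmetric, the parabolic $\mathfrak p$ is maximal and its nilradical $\mathfrak g/\mathfrak p$ is abelian; this is exactly what makes the Jordan–H\"older filtration of $\mathfrak p$ short and the highest weights of its factors easy to control. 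First I would set up the same notation as in Proposition \ref{c1}: fix positive roots $\Phi_+$, write $\mathfrak p = \mathfrak t \oplus \bigoplus_{\alpha \in \Phi_- \cup \Phi_P}\mathfrak g_\alpha$ where now $\Phi_P$ is generated by all simple roots except the single marked node $\alpha_{j}$, and take the Jordan–H\"older filtration $\mathfrak p = V_0 \supset V_1 \supset \cdots$ with irreducible factors $W_i$ of highest weight $\lambda_i \in \{0\}\cup \Phi_- \cup \Phi_P$ for $\mathfrak p^{ss}$.

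The key step is to run the Bott–Borel–Weil vanishing criterion on each twisted bundle $W_i \otimes L^k \otimes K_X$. Writing the weight of $L$ as an integer multiple of the fundamental weight $\omega_j$ dual to $\alpha_j$ (ampleness forces the coefficient to be $\ge 1$), and the weight of $K_X$ as $-2\rho_P = -\sum_{\alpha\in\Phi_+\setminus\Phi_P}\alpha$, I would show that the weight $\mu_i = \lambda_i + (\text{coeff})\,\omega_j - 2\rho_P$ satisfies $\langle \mu_i + \rho, \beta\rangle \ge 0$ for every $\beta \in \Phi_+$, so that $\mu_i + \rho$ is dominant (hence nonsingular in the chamber containing it), which by Bott's theorem forces all higher cohomology of $W_i \otimes L^k\otimes K_X$, and in particular $H^1$, to vanish. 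As in Proposition \ref{c1}, the inequality is immediate for simple roots $\beta$ lying in the Levi part (where $\langle\lambda_i,\beta\rangle\ge 0$, $\langle\rho,\beta\rangle = 1$, $\langle\rho_P,\beta\rangle\ge 0$); the only delicate case is the marked root $\beta = \alpha_j$, where $\langle\lambda_i,\alpha_j\rangle$ can be negative. Here I would use that the nilradical is abelian, which bounds $\langle\lambda_i,\alpha_j\rangle \ge -1$ (the $\alpha_j$-string through any weight of an abelian nilradical has length at most two), together with the coefficient of $\omega_j$ in the weight of $L$ being $\ge 1$ and $\langle\omega_j,\alpha_j\rangle = 1$, so the positive contribution of $L^k$ dominates; once $X \not\cong \mathbb P^n$ the Levi $\mathfrak p^{ss}$ is nontrivial and the remaining terms $\langle\rho,\alpha_j\rangle = 1$ and $\langle -2\rho_P,\alpha_j\rangle$ can be estimated exactly as in Proposition \ref{c1} using that $2\rho - 2\rho_P$ is the sum of positive roots of the Levi and thus pairs nonpositively with $\alpha_j$.

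Finally, I would assemble the vanishing on the factors into vanishing on $\mathfrak p$ itself: the short exact sequences $0 \to V_{i+1} \to V_i \to W_i \to 0$ (twisted by $L^k\otimes K_X$) give a long exact sequence in cohomology, and induction on $i$ starting from the top factor propagates $H^1 = 0$ up to $H^1(X, (G\times_{ad P}\mathfrak p)\otimes L^k\otimes K_X) = 0$. The main obstacle I anticipate is the bookkeeping in the marked-root case: one must verify the bound $\langle\lambda_i,\alpha_j\rangle \ge -1$ uniformly over all Jordan–H\"older factors and confirm that for the minuscule/cominuscule parabolics of the non-$\mathbb P^n$ Hermitian symmetric types the sum $\langle\rho_P,\alpha_j\rangle + (\text{coeff of }L) - 1 \ge 0$ always holds, which is where the hypothesis $X\not\cong\mathbb P^n$ is genuinely used (for $\mathbb P^n$ with $L = \mathcal O(1)$ the relevant weight becomes singular, reproducing the known exceptional case $kd = n$ noted in the remark above). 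Everything else is a routine application of Bott's theorem and the standard structure theory of Hermitian symmetric pairs.
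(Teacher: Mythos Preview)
Your strategy of applying Bott--Borel--Weil to the Jordan--H\"older factors of $\fp$ and verifying dominance of $\mu_i + \rho$ runs into a genuine obstacle: the claimed bound $\langle \lambda_i, \alpha_j\rangle \ge -1$ is false for the factor of $\fp$ coming from the (abelian) negative nilradical. That factor is irreducible over $\fp^{ss}$ with highest weight $\lambda_i = -\alpha_j$, so $\langle \lambda_i, \alpha_j\rangle = -2$; your root-string reasoning does not apply to $-\alpha_j$ itself, since it is proportional to $\alpha_j$. Concretely, for $X = G(2,4)$, $L = \OX_X(1)$, $k = 1$, one has $2\rho_P = 4\omega_2$ and $\mu_i + \rho = -\alpha_2 + \omega_2 - 4\omega_2 + \rho = 2\omega_1 - 4\omega_2 + 2\omega_3$, which is not dominant. (It happens to be singular, so the cohomology still vanishes --- but your argument does not establish this.) More generally, whenever $L^k\otimes K_X$ is negative the term $-2\rho_P$ pulls the weight far from the dominant chamber, and no uniform dominance estimate of the kind you propose can work; one would need a separate singularity analysis for each factor, which is considerably more than what you sketch.

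The paper takes a cleaner route: rather than decomposing $\fp$, it uses the long exact sequence of $0 \to \fp \to \fg \to \fg/\fp \to 0$ tensored with $L^k\otimes K_X$ and proves directly that $\fg \otimes H^0(X, L^k\otimes K_X) \to H^0(X, T_X\otimes L^k\otimes K_X)$ is surjective (together with the standard vanishing $H^1(X, L^k\otimes K_X) = 0$ for line bundles on $X$, this is equivalent to condition \eqref{C1}). The point is that $\fg/\fp$ is already irreducible with highest weight equal to the highest root $\theta$, and the hypothesis $X \not\cong \PP^n$ says precisely that the coefficient of $\omega_j$ in $\theta$ is zero. Writing $L^k\otimes K_X = \OX_X(t)$, one argues: if $t \ge 0$ the target $H^0(X, T_X\otimes \OX_X(t))$ is an irreducible $\fg$-module by Borel--Weil, so the nonzero $\fg$-equivariant map onto it is surjective; if $t < 0$ the highest weight $\theta + t\omega_j$ is not dominant and the target vanishes. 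Your identification of where $X \not\cong \PP^n$ enters (``the Levi $\fp^{ss}$ is nontrivial'') is also off --- the Levi of $\PP^n$ has $\fp^{ss} = \fsl_n$, which is nontrivial for $n \ge 2$; what actually distinguishes $\PP^n$ is that the $\omega_j$-coefficient of $\theta$ is nonzero there.
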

\begin{proof}
Let $X=G(a,l+1)$ be a Grassmannian not isomorphic to $\PP^l$. Let $\OX(1)$ be the positive generator of Picard group of $X$. Then $L^k\otimes K_X=\OX(t)$ for some integer $t$. The tangent bundle $TX$ is associated bundle of $\fg/\fp$ and $\fg/\fp$ is irreducible $\fp^{ss}$ representation with highest weight being the highest long root. We use the previous notations and $\Phi_P$ is generated by $\{\alpha_1,\cdots, \alpha_l\}\backslash \{\alpha_{j}\}$. Let $\omega_i$ be the $i$th fundamental weight. Then the highest long root is $\omega_1+\omega_l$. The line bundle $\OX(1)$ is the associated line bundle with weight $\omega_j$ with $j\neq 1, l$. If $t\geq 0$, then $H^0(X, L^k\otimes K_X)\neq 0$ and $H^0(X, T_X\otimes L^k\otimes K_X)$ is an irreducible $\fg$-module. So the map $\fg\otimes H^0(X, L^k\otimes K_X)\to H^0(X, T_X\otimes L^k\otimes K_X)$ is surjective. If $t<0$, then $H^0(X, T_X\otimes L^k\otimes K_X)=0$ since $\omega_1+\omega_l-t\omega_j$ is not dominant weight. Let $X$ be any other irreducible Hermitian symmetric space $G/P$. Let $P$ be maximal parabolic subgroup with $\alpha_j$ removed. The highest long root $\lambda=\sum a_i \omega_i$. If $X$ is not isomorphic to projective spaces, then $a_j=0$. The argument for Grassmannian applies to this case. 
\end{proof}


Now we discuss vanishing condition \eqref{C}. When $X$ is Hermitian symmetric, the homogeneous bundle is decomposed as direct sum of irreducible vector bundles and the corresponding highest weights are given by the criterion of Kostant \cite{Kostant}. In \cite{Snow1}, Snow has found some sufficient conditions. See the Proposition in Section 1 of \cite{Snow1}. Especially for Grassmannian $X=G(a, l+1)$, vanishing condition \ref{C} holds for $L=\OX(t)$ with $t\geq l$. When $t<l$, there are some cases not satisfying the vanishing condition. For instance, the cohomology $H^p(X, \Omega^{n-p}\otimes \OX(2))\neq 0$ when $X=G(a, 2a)$ and $p={a^2-a\over 2}$. See \cite{SnowG} Theorem 3.2 and 3.3.

\begin{exm}
Here we show some examples for which vanishing condition \eqref{C} holds. The upshot is that the homogenous bundles involved are not direct sum of irreducible vector bundles and the irreducible factors in Jordan-H\"older filtration admits nontrivial higher cohomology.

Let $X^n=G/B$ with $\fg=\fsl_{l+1}$ and $L=K_X^{-1}$ with $l=1,2,3,4$. Then we have $\Omega^p\otimes L\cong \wedge^{n-p} T_X$. When $l=1,2$, we have $H^i(X, \wedge^j T_X)=0, i\geq 0$ from Borel-Weil-Bott. For $l=3$, the only remaining case is $H^i(X, \wedge^2 T_X)=0$ for $i\geq 1$. Since $T_X\cong G\times_B (\fg/\fb)$, it has a natural filtration given by the heights of positive roots $V_0=\fg^3/\fb_-\supset V_1=\fg^2/\fb_-\supset V_2=\fg^1/\fb_- \supset 0$ with factors 
\begin{equation}
W_i=V_i/V_{i+1}\cong \bigoplus_{ht(\alpha)= 3-i}L_\alpha
\end{equation}
The induces a filtration $\wedge^2 V_0=V_0\wedge V_1\supset \cdots \wedge^2 V_2$ with successive quotients $W_{01} =W_0\otimes W_1, W_{02}=W_0\otimes W_2, W_{11}=\wedge^2 W_1, W_{12}=W_1\otimes W_2, W_{22}=\wedge^2 W_2$.

Then we have the following spectral sequence from the filtration
\begin{equation}
\begin{tikzpicture}
  \matrix (m) [matrix of math nodes,
    nodes in empty cells,nodes={minimum width=5ex,
    minimum height=5ex,outer sep=-5pt},
    column sep=1ex,row sep=1ex]{
                &      &     &     & &&\\
          0\quad     &   210\oplus 012 &  0  & 0 & 0&0&\\
          -1\quad    &  0  & 103\oplus 301\oplus 020 &  0  &0&0& \\
          -2 \quad   &  0  & 0 &  020 & 020\oplus 020& 0 &\\
          -3 \quad   &  0 & 0 &  0  & 101\oplus 101&101&\\
          -4  \quad  &   0 & 0 &  0  &0 &0&\\        
    \quad\strut &   0  &  1  &  2  & 3& 4& \strut \\};
\draw[thick] (m-1-1.east) -- (m-7-1.east) ;
\draw[thick] (m-7-1.north) -- (m-7-7.north) ;
\end{tikzpicture}
\end{equation}
The terms $E_1^{p,q}$ in the spectral sequence are $H^{p+q}(W_p)$ with $W_{ij}$ reindexed and $m_1m_2m_3$ stands for irreducible $\fg$-module with highest weight $\sum_i m_i \omega_i$, where $\omega_i$ is the $i$-th fundamental weight. Now we study the differential $101\oplus 101\to 101$. The $E_1^{4,-3}$ component is $H^1(W_4)$ with $W_4$ generated by $v_1=e_{\alpha_1}\wedge e_{\alpha_3}$ and $E_1^{3,-3}$ is $H^0(W_3)$ with $W_3$ generated by $v_2=e_{\alpha_1+\alpha_2}\wedge e_{\alpha_3}, v_3=e_{\alpha_2+\alpha_3}\wedge e_{\alpha_1}$. Notice that $\sigma_{\alpha_2}(\alpha_2+\alpha_3+\rho)-\rho=\alpha_1+\alpha_2+\alpha_3=101$. Denote the minimal parabolic subgroup generated by $\alpha_2$ by $P_{\alpha_2}$. Consider the projection map $\pi_{\alpha_2}\colon G/B\to G/P_{\alpha_2}$ and the spectral sequence converging to $R^*{\pi_{\alpha_2}} V_0$ induced by the same filtration. The fibers for the bundles in the spectral sequence are determined by the action of the Borel of $\fsl_2$ generated by $\alpha_2$. The short exact sequence is given by 
\begin{equation}
\label{es}
0\to W_4\to V_3\to W_3\to 0
\end{equation}
with action of $\fsl_2$ on 
\begin{align}
f_{\al_2}{v_1}=0, f_{\al_2}{v_2}=v_1, f_{\al_2}{v_3}=-v_1\\
h_{\al_2}{v_1}=-2v_1, h_{\al_2}{v_2}=0, h_{\al_2}{v_3}=0
\end{align}
If we consider the subspace $\tilde{V}_3$ generated by $v_1, v_2(\text{or }v_3)$, then the restriction of $\ref{es}$ on $\PP^1=P_{\al_2}/B$ is isomorphic to
\begin{equation}
0\to \OX(-1)\to \CC^2\to \OX(1)\to 0
\end{equation}
twisted by $\OX(-1)$. 
Hence we have $R^1\pi_{\al_2}(\tilde{V}^3)|_{\PP^1}\cong H^0(\PP^1,\OX(-1))\otimes \CC^2\cong 0$ and $R^0\pi_{\al_2}(\tilde{V}^3) \cong 0$. The boundary map $R^0\pi_{\al_2}(W_3)\to R^1\pi_{\al_2}(W_4)$ is isomorphism on each component. From the following commutative diagram
\begin{equation}
\begin{tikzcd}
H^0(G/P_{\al_2},R^0\pi_{\al_2}(W_3))\arrow{r}\arrow{d} & H^0(G/P_{\al_2}, R^1\pi_{\al_2}(W_4))\arrow{d} \\
H^0(G/B, W_3) \arrow{r} & H^1(G/B, W_4)
\end{tikzcd}
\end{equation}
We conclude that $d_1\colon E_1^{3,-3}\to E_1^{4,-3}$ is isomorphism restricted to each component. 

The same argument shows that $d_1\colon E_1^{2,-2}\to E_1^{3,-2}$ is isomorphism when projected to each components of $E_1^{3,-2}$. But the second differential is difficult to calculate this way.

There is another approach for calculating the cohomology for homogenous bundles. See proposition 2.8 in \cite{Qi}. It combines the BGG resolution of Verma modules and Bott's theorem on cohomology of homogenous vector bundles. The highest weight $\lambda$ part of the cohomology of vector bundle $G\times_B E$ is given by the following sequence
\begin{equation}
0\to E[\lambda]\to \oplus_{w\in W, l(w)=1}E[w\lambda]\to \cdots\to E[w_0\lambda]\to 0
\end{equation}
Apply this sequence to our situation, we only need to check $\lambda=(020)$ and $E=\wedge^2(\fg/\fb)$. The first differential is realized as multiplication by $\pm f_i^{(\lambda, \alpha_i^{\vee})+1}$. In this case, we have $d_1^*(e_{\alpha_1+\alpha_2+\alpha_3}\wedge e_{\alpha_2})=f_1(e_{\alpha_1+\alpha_2+\alpha_3}\wedge e_{\alpha_2})\pm f_3(e_{\alpha_1+\alpha_2+\alpha_3}\wedge e_{\alpha_2})=e_{\alpha_2+\alpha_3}\wedge e_{\alpha_2}\pm e_{\alpha_1+\alpha_2}\wedge e_{\alpha_2}$ and $d_1^*(e_{\alpha_1+\alpha_2}\wedge e_{\alpha_2+\alpha_3})=f_1(e_{\alpha_1+\alpha_2}\wedge e_{\alpha_2+\alpha_3})\pm f_3(e_{\alpha_1+\alpha_2}\wedge e_{\alpha_2+\alpha_3})=e_{\alpha_2}\wedge e_{\alpha_2+\alpha_3} \pm e_{\alpha_1+\alpha_2}\wedge e_{\alpha_2}$. The choices of plus or minus sign are the same in two expressions. Hence $d_1^*$ is surjective. We have $H^1(X, \wedge^2 TX)=0$. When $l=4$, the same calculation shows vanishing $H^i(X, \wedge^j T_X)=0, i> 0$.

\end{exm}

\section{An application to differential zeros of period integrals}
\label{application}
Let $X$ be an $n$-dimensional smooth projective variety such that its anti-canonical line bundle $L:=K_X^{-1}$ is very ample.  Let $G$ be a connected algebraic group acting on $X$. We assume that $G$ acts with finitely many orbits. We shall regard the basis elements $a_i$ of $V=\Gamma(X,L)^{\vee}$ as linear coordinates on $V^{\vee}$. Let $B:=\Gamma(X,L)_{sm}\subset V^{\vee}$ be the space of smooth sections.

 Let $\pi:\cY\rightarrow B
$
 be the family of smooth CY hyperplane sections $Y_a\subset X$, and let $\HH^{\text{top}}$ be the Hodge bundle over $B$ whose fiber at $a\in B$ is the line $\Gamma(Y_a,\omega_{Y_a})\subset H^{n-1}(Y_a)$. In \cite{LY} the period integrals of this family are constructed by giving a canonical trivialization of $\HH^{\text{top}}$. Let $\Pi$ be the period sheaf of this family, i.e. the locally constant sheaf generated by the period integrals.
 
\cite{CHL17} initiated a study of the zero loci of derivatives of period integrals under this canonical trivilization. Namely, For any $\delta\in D_{V^\vee}$, denote
\begin{equation}\label{Np}
\cN(\delta)=\{b\in B\mid \delta \gs(b)=0,\, \forall \text{periods }\gs\}.
\end{equation}

In \cite{CHL17}, it has been shown that $\cN(\delta)$ is algebraic, and in the case when $X=\PP^n$, an explicit equation for $\cN(\delta)$ was given -- see Thm 7.2 in \cite{CHL17}. This explicit equation in particular gives a natural stratification of the zero loci. In \cite{CHL17}, based on the theory of tautological systems applied to general homogeneous varieties $G/P$, Thm 7.2 is a direct consequence of Lemma 7.1, whose proof relies on the algebraic and geometric rank formulae (Thm 2.9 in \cite{BHLSY}, and Thm 1.4 in \cite{HLZ}) for tautological systems that both apply to general $G/P$, together with the Jacobian ring description of the Hodge structure of hypersurfaces in $\PP^n$. Now, the generalization of the latter to certain cases of homogeneous varieties directly provides a generalization of Thm 7.2 to those cases, where the statement stays the same word by word. In the following, we state and prove the generalization of Lemma 7.1, therefore establishing the generalization of Thm 7.2 as mentioned. We will follow notations in \cite{HLZ}\cite{BHLSY}. In particular, $H_0(\hat{\gg},Re^{f(b)})$ denotes a particular Lie algebra coinvariant space, which appears as the right hand side of the algebraic rank formula for tautological systems -- see section 2 of \cite{BHLSY}.

\begin{lem}[Degree bound lemma]\label{dbl}
Let $X$ be a homogeneous variety $G/P$ of dimension $n$, where Theorem \ref{main} holds,\, let $\hat{\gg}=Lie(G)\oplus\CC.$ Let $Z_i$ be a basis of $\hat{\gg}$. Suppose $f(b)$ is nonsingular. For $h\in R$,  $he^{f(b)}\equiv0$ in $H_0(\hat{\gg},Re^{f(b)})$ iff $$he^{f(b)}=\sum Z_i(r_i e^{f(b)})$$ for some $r_i\in R,$ and $\deg r_i\leq\deg h-1,\,\forall i.$
\end{lem}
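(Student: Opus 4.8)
### Proof Strategy

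The plan is to identify the coinvariant space $H_0(\hat\gg, Re^{f(b)})$ with the quotient $R/J$ (where $J$ is the generalized Jacobian ideal of Definition \ref{maindef}, here with $L = K_X^{-1}$), and then track degrees through this identification. Recall that $\hat\gg = \gg \oplus \CC$ acts on $Re^{f(b)}$: the central $\CC$ acts by multiplication by $f(b)$ (up to the relevant normalization coming from the tautological system), and $Z \in \gg$ acts by $Z(re^{f(b)}) = (Zr + r\, L_Z f(b))\,e^{f(b)}$, where $Zr$ is the natural $\gg$-action on $R$ and $L_Z f$ is the Lie derivative appearing in Definition \ref{maindef}. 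Hence the coinvariants are exactly $R/J'$ where $J'$ is the ideal generated by $f(b)$ and the elements $Zr + r\,L_Z f(b)$ for $Z \in \gg$, $r \in R$. The first step is to check that $J' = J$ as ideals: the containment $J \subseteq J'$ is clear (take $r=1$ to get $L_Z f \in J'$, and $f \in J'$ directly), and conversely $Zr + r\,L_Z f = Zr + r\,L_Z f$ lies in $J$ once one knows $L_Z f \in J$ and $Zr \in R$ — but $Zr$ need not lie in $J$, so in fact one must be slightly more careful and instead argue that modulo $f$, the submodule $\sum_i Z_i(R e^f)$ already equals $J$, because the "extra" terms $Zr$ for $r$ a multiple of $f$ or $L_Z f$ absorb back into $J$. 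This bookkeeping is the content of the forward implication.

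Concretely, for the "if" direction: suppose $h e^{f(b)} = \sum_i Z_i(r_i e^{f(b)})$ with $\deg r_i \le \deg h - 1$. Expanding, $h = \sum_i(Z_i r_i + r_i L_{Z_i} f) + c\, f$ for some constant $c$ (absorbing the central element), and every term on the right lies in $J$ in degree $\deg h$: the terms $r_i L_{Z_i} f$ are manifestly in $J$, the term $cf$ is in $J$, and $\sum_i Z_i r_i$ — while not obviously in $J$ term by term — must lie in $J$ because $h$ does not (wait: $h$ is what we are trying to conclude about). Let me restate: the point of the "if" direction is that the displayed expression is a witness that $he^{f(b)} \equiv 0$ in the coinvariants, which is immediate from the definition of coinvariants as the quotient by $\sum_i Z_i(\hat\gg\text{-module})$. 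So the "if" direction is essentially trivial once the module structure is unwound; the real content is the degree bound in the "only if" direction.

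For the "only if" direction: assume $he^{f(b)} \equiv 0$ in $H_0(\hat\gg, Re^{f(b)})$, so $he^{f(b)} = \sum_i Z_i(s_i e^{f(b)})$ for some $s_i \in R$ with no a priori degree control. The strategy is to project to the graded pieces: since $R$ is graded and the $\gg$-action preserves degree while multiplication by $f(b)$ raises degree by one (as $f(b) \in H^0(X,L)$ has degree $1$), the equation decomposes degree by degree. Write $s_i = \sum_j s_{i,j}$ in homogeneous components. The terms contributing to degree $d = \deg h$ in $\sum_i Z_i(s_i e^f)$ are: $Z_i s_{i,d}$ (from the $\gg$-action in degree $d$) and $s_{i,d-1}\, L_{Z_i} f$ (degree $d-1$ times the degree-$1$ element $f$, via the central/multiplicative part). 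So only $s_{i,d-1}$ and $s_{i,d}$ matter, and the homogeneous components of $s_i$ in other degrees can be discarded — but discarding $s_{i,d}$ is not free, since $Z_i s_{i,d}$ is a genuine degree-$d$ contribution. Here I would invoke Theorem \ref{main}: the isomorphism $(M/JM)^k \cong F^{n-k}H^n(U)/F^{n-k+1}H^n(U)$ together with the surjectivity statements in its proof (surjectivity of $\fg \otimes H^0(X, K_X \otimes L^k) \to H^0(X, \Omega^{n-1}_X(kL))$, coming from condition \eqref{C1}) shows that any degree-$d$ element of the form $\sum_i Z_i s_{i,d}$ that lies in $J$ can be rewritten using only generators $L_{Z_i} f$ with coefficients of degree $d - 1$; intuitively, $J^d = (R \cdot f)^d + \sum_i (R \cdot L_{Z_i} f)^d$ and the $\gg$-action doesn't produce anything genuinely new in the Jacobian quotient. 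Thus the witness can be rechosen with $\deg r_i \le d - 1$.

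The main obstacle, as I see it, is precisely controlling the term $\sum_i Z_i s_{i,d}$ — the top-degree piece of the $\gg$-action, which is "too high" by one degree. One needs that this term, being forced to lie in the degree-$d$ part of $J$ (since $he^f \equiv 0$ and all other terms are accounted for), can be absorbed into the lower-degree generators. This is exactly where the strength of the Jacobian ring description from Theorem \ref{main} — specifically the identification of $\ker\bar\alpha_k$ with $JM^{k-1}$, equivalently that $J$ is generated in the expected degrees with the expected coefficient degrees — is needed, and it is the reason the hypothesis "Theorem \ref{main} holds for $X$" appears in the statement. I would organize the argument so that this reduction is isolated as a short lemma: \emph{in the graded ring $R$, for any $h \in R^d$, one has $h \in J^d$ if and only if $h = c f + \sum_i r_i\, L_{Z_i} f$ with $r_i \in R^{d-1}$} — which is almost the definition of $J$ being generated by $f$ (degree $1$) and the $L_{Z_i}f$ (degree $1$), except one must rule out that lower-degree relations among the generators force higher-degree coefficients; that non-degeneracy is what condition \eqref{C1} and Theorem \ref{main} buy us.
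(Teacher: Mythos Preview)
Your ``if'' direction is fine (it is indeed immediate from the definition of coinvariants), but the ``only if'' direction has a real gap, and the gap is precisely where the paper's proof uses an ingredient you never invoke.

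Your strategy is to identify $H_0(\hat\gg,Re^{f(b)})$ with $R/J$ and then read off the degree bound. But this identification is not the naive one: the map $R\to H_0$, $r\mapsto [re^{f}]$, does \emph{not} kill $J$. Indeed $Z(re^f)=(Zr+rL_Zf)e^f$, so in $H_0$ the class $[rL_Zf\cdot e^f]$ equals $-[Zr\cdot e^f]$, which is generally nonzero. So there is no obvious map $R/J\to H_0$ to begin with, and your attempt to show ``$J'=J$'' cannot succeed as stated. Your later workaround --- projecting the witness $he^f=\sum Z_i(s_ie^f)$ to degree $d=\deg h$ and trying to absorb the residual term $\sum_i Z_is_{i,d}$ --- runs into the same wall: you assert this term lies in $J^d$, but nothing in your argument establishes that. (Your proposed lemma, that $h\in J^d$ iff $h=cf+\sum r_iL_{Z_i}f$ with $r_i\in R^{d-1}$, is simply the statement that $J$ is generated in degree $1$, which is its definition; Theorem~\ref{main} is not needed for it, and it does not help you show $\sum_i Z_is_{i,d}\in J$.)

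The paper's argument is structurally different. First, by reducing the top degree of $h$ modulo $J$ and iterating, one writes $he^f=\sum Z_i(r_ie^f)+\sum c_kB_ke^f$ with $\deg r_i\le\deg h-1$, where $\{B_k\}$ is a fixed $\CC$-basis of $R/J$; this is pure graded algebra and shows the $B_ke^f$ \emph{span} $H_0$. The decisive step is then a \emph{dimension count}: Theorem~\ref{main} gives $\dim_\CC R/J=h^n(X\setminus Y_{f(b)})$, while the algebraic and geometric rank formulae for tautological systems (\cite{BHLSY},\cite{HLZ}) give $\dim H_0(\hat\gg,Re^{f(b)})=h^n(X\setminus Y_{f(b)})$. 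Equality of dimensions forces the $B_ke^f$ to be linearly independent in $H_0$, so $he^f\equiv 0$ iff every $c_k=0$, which yields the degree-bounded witness. You never mention the rank formulae, and without them the linear-independence step --- hence the conclusion --- is missing.
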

\begin{proof}

The `if' direction is obvious. For the `only if' direction, recall the homogeneous Jacobian ideal $J:=< Z_i {f(b)}| Z_i\in\hat{\fg}>$ of $R$. Let $B_k$ denote a $\CC$-basis for the degree $k$ part of $R/J$. First, since ${f(b)}$ is homogeneous of degree $1$, the degree $0$ part of $R/J$ is nonzero, and is spanned by $1$. For any $h\in R$, consider expanding the highest degree component of $h$, which we denote by $h_0$, in degree = $\deg h$ part of $R/J$ in terms of the chosen basis: i.e. by definition, there exist elements $s_i\in R$, such that $h_0-\sum s_iZ_i(f(b))$ can be written as a linear combination of the chosen basis elements in degree = $\deg h$. Obviously, we can require that $\deg s_i\leq \deg h-1$ for each $i$ by dropping all higher degree components of each of these $r_i$, if there are any. Working degree by degree, it is clear that we can choose $r_i\in R$ with $\deg r_i\leq \deg h-1, \forall i$, such that $he^{f(b)}=\sum Z_i(r_i e^{f(b)})+\sum c_kB_k$, where $\sum c_kB_k$ denote a linear combination of elements of the $B_k$ with all $k\leq \deg h$. Therefore, $H_0(\hat{\gg},Re^{f(b)})$ is spanned by $B_k$.

On the other hand, by Theorem \eqref{main} applied to the Calabi-Yau case $L=K_X^{-1}$, and taking the sum over $k$, one has $\dim_{\CC} R/J=H^{n}(X-V({f(b)}))$. Combining the algebraic and geometric rank formula for tautological systems applied to $G/P$\cite{BHLSY}\cite{HLZ}, we have in this case, $h^{n}(X-V({f(b)}))=\dim H_0(\hat{\gg},Re^{f(b)})$. Therefore, the collection of $B_k$ consists of linearly independent elements, and $he^{f(b)}=0$ in $H_0(\hat{\gg},Re^{f(b)})$ iff all coefficients $c_k=0$.
\end{proof}

\begin{rem}
Here our grading convention on $R$ is such that $f(b)$ is of degree 1. $1\in\CC$ acts on $V^{\vee}$ by identity. In the algebraic rank formula for tautological systems, there is a twist of the action of the Euler operator by a constant, however this twist does not affect the above proof, since the degree 0 part of $R/J$ is of dimension 1, and is spanned by 1.
\end{rem}


\section{Zero loci of vector bundle sections}
\label{Vector}
In this section we describe Hodge structure of zero loci of vector bundle sections on homogenous variety $X=G/P$. We first fix some notations.
\begin{enumerate}
\item Let $G$ be a semisimple algebraic group with parabolic subgroup $P$ and $X=G/P$ be the corresponding flag variety.
\item Let $\rho\colon P\to GL(r)$ be a semisimple representation of $P$. The associated homogenous vector bundle is $E=G\times_P \CC^r$. 
\item Consider the space of global sections $V^\vee=H^0(X, E)$ and $f\in V^\vee$. The zero locus is defined by $Y_f=\{f=0\}$.
\end{enumerate}

There are two different typical examples, one is that $E$ is the direct sum of line bundles $E=L_1\oplus L_2\cdots \oplus L_r$, the other is that $\rho$ is an irreducible representation of $P$. The idea follows the Cayley trick in the description of complete intersections. See Konno's paper \cite{konno} for Green's Jacobian ring and application to Torelli theorem for complete intersections. For complete intersections in toric varieties, see \cite{Mavlyutov}. Let $\PP=P(E^\vee)$ be the projectivation of $E^\vee$ and $\OX(1)$ be the hyperplane section bundle on $\PP$. The projection map is denoted by $\pi\colon \PP\to X$. From now on, we assume $E$ is ample and by definition, is equivalent to $\OX(1)$ being ample. We collect the propositions relating the geometry of $X$ and $\PP$ in the following. Proposition \ref{prop1} and \ref{prop2} are from \cite{terasoma},\cite{konno} and \cite{Mavlyutov}. Proposition \ref{PCV} is from Corollary 4.9 in \cite{konno}.

\begin{prop}
\label{prop1}
\begin{enumerate}
\item There is a natural isomorphism $H^0(X, E)\cong H^0(\PP, \OX(1))$. The corresponding section in $H^0(\PP, \OX(1))$ is also denoted by $f$. 
\item Let $\tY$ be the zero locus of $f$ in $\PP$. Then $\tY$ is smooth if and only if $Y$ is smooth with codimension $r$ or empty. 
\item There is an natural isomorphism $K_\PP\cong \pi^*(K_X\otimes \det E)\otimes \OX(-r)$
\end{enumerate}
\end{prop}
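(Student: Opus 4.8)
The plan is to realize all three statements as instances of the classical Cayley trick, so the first thing I would do is pin down the projectivization convention and keep it fixed throughout: $\PP=P(E^\vee)$ parametrizes lines in the fibers of $E^\vee$, so that the tautological line bundle embeds as $\OX(-1)\hookrightarrow\pi^*E^\vee$ and, dually, $\pi^*E\twoheadrightarrow\OX(1)$. Consequently $\OX(1)$ restricts to the hyperplane bundle on each fiber $P(E_x^\vee)\cong\PP^{r-1}$, so $\pi_*\OX(1)\cong E$ and $R^i\pi_*\OX(1)=0$ for $i>0$. Part (1) is then immediate: the edge map of the Leray spectral sequence for $\pi$ gives a natural isomorphism $H^0(\PP,\OX(1))\cong H^0(X,\pi_*\OX(1))=H^0(X,E)$, and we denote by $f$ also the image of the given section.

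For part (2) I would pass to the complement $\dot{E}^\vee$ of the zero section in the total space of $E^\vee$, on which $\CC^*$ acts freely with quotient $\PP$. The pullback of $\OX(-1)$ to $\dot{E}^\vee$ is canonically trivial, so the section $f\in H^0(\PP,\OX(1))$ pulls back to the function $\tilde f(x,\xi)=\langle\xi,f(x)\rangle$ on $\dot{E}^\vee$, whose zero set is precisely the $\CC^*$-cone over $\tY$; hence $\tY$ is smooth iff $\{\tilde f=0\}$ is smooth. Fixing a local trivialization of $E$ near a point $x_0$ and writing $f=(f_1,\dots,f_r)$, one computes at a zero $(x_0,\xi_0)$ of $\tilde f$ that $d\tilde f=\sum_j f_j(x_0)\,d\xi_j+\sum_j\xi_{0,j}\,(df_j)_{x_0}$; the fiber-direction part vanishes exactly when $x_0\in Y$, and in that case the base-direction part is the linear functional $v\mapsto\langle\xi_0,(d_{x_0}f)(v)\rangle$ attached to the intrinsic derivative $d_{x_0}f\colon T_{x_0}X\to E_{x_0}$ at the zero $x_0$. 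Thus a singular point of $\{\tilde f=0\}$ lying over $x_0$ exists iff $x_0\in Y$ and $d_{x_0}f$ is not surjective; so $\tY$ is smooth iff $d_{x_0}f$ is surjective for every $x_0\in Y$, i.e. iff $f$ is transverse to the zero section, which is exactly the condition that $Y=Y_f$ be empty or smooth of pure codimension $r=\mathrm{rk}\,E$.

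For part (3) I would twist the tautological exact sequence $0\to\OX(-1)\to\pi^*E^\vee\to\pi^*E^\vee/\OX(-1)\to0$ by $\OX(1)$ and identify $T_{\PP/X}\cong\mathcal{H}om(\OX(-1),\pi^*E^\vee/\OX(-1))$, obtaining the relative Euler sequence
\begin{equation*}
0\longrightarrow\OX_{\PP}\longrightarrow\pi^*E^\vee\otimes\OX(1)\longrightarrow T_{\PP/X}\longrightarrow0 .
\end{equation*}
Taking top exterior powers gives $\det T_{\PP/X}\cong\pi^*(\det E)^{-1}\otimes\OX(r)$, hence $\omega_{\PP/X}\cong\pi^*\det E\otimes\OX(-r)$, and therefore
\begin{equation*}
K_{\PP}\cong\pi^*K_X\otimes\omega_{\PP/X}\cong\pi^*(K_X\otimes\det E)\otimes\OX(-r) .
\end{equation*}

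Neither the local Jacobian computation in (2) nor the determinant bookkeeping in (3) is where the real difficulty lies; the step requiring the most care is the last part of (2), namely identifying smoothness of $\tY$ with transversality of $f$ along $Y$ and hence with the stated condition on $Y_f$. There one uses that a section of a rank-$r$ bundle whose zero scheme has the expected codimension $r$ is automatically a regular section, so its zero scheme is smooth precisely when $f$ meets the zero section transversally; conversely any component of $Y$ of codimension $<r$, or a non-reduced structure, forces $d_{x_0}f$ to fail surjectivity and hence $\tY$ to be singular. The remaining details follow the treatments of the Cayley trick in \cite{terasoma}, \cite{konno}, \cite{Mavlyutov}.
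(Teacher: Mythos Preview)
Your proof is correct. The paper does not give its own proof of this proposition; it simply records the statement and attributes it to \cite{terasoma}, \cite{konno}, \cite{Mavlyutov} as part of the standard Cayley trick package. What you have written is exactly the argument one finds in those references: Leray plus $\pi_*\OX(1)\cong E$ for (1), the cone computation on $\dot E^\vee$ identifying singular points of $\tY$ with failure of transversality of $f$ for (2), and the relative Euler sequence determinant for (3). Your care in the last paragraph about equating ``$\tY$ smooth'' with ``$Y$ empty or smooth of the expected codimension $r$'' is the only nonformal point, and you have handled it correctly: surjectivity of $d_{x_0}f$ at every $x_0\in Y$ is equivalent, by the implicit function theorem, to $Y$ being smooth of codimension $r$ (or empty), and to the absence of a nonzero $\xi_0\in E_{x_0}^\vee$ annihilating the image of $d_{x_0}f$. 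There is nothing to compare against in the paper beyond the citations, which you already invoke.
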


From now on, we assume $Y$ is smooth with codimesion $r\geq 2$. 
\begin{defn}
The variable cohomology $H^{n-r}_{var}(Y)$ is defined to be cokernel of $H^{n-r}(X)\to H^{n-r}(Y)$. 
\end{defn}

\begin{prop}
\label{prop2}
There is an isomorphism 
\begin{equation}
H^{n+r-1}(\PP-\tY)\cong H^{n+r-2}_{var}(\tY)(-1)\cong H^{n-r}_{var}(Y)(-r)
\end{equation}
\end{prop}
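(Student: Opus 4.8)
The plan is to derive both isomorphisms from the Cayley trick, i.e. from the geometry of the projective bundle $\pi\colon \PP = P(E^\vee)\to X$ together with the relation between $\tY$ and $Y$. The first isomorphism $H^{n+r-1}(\PP-\tY)\cong H^{n+r-2}_{var}(\tY)(-1)$ is just the Gysin/residue sequence for the smooth ample hypersurface $\tY\subset \PP$ applied in the middle dimension: since $\PP$ has dimension $n+r-1$, the long exact sequence of the pair $(\PP,\PP-\tY)$ together with the Thom isomorphism $H^{k}(\PP,\PP-\tY)\cong H^{k-2}(\tY)(-1)$ gives, after taking primitive/variable parts and using weak Lefschetz for $H^{*}(\PP)\to H^{*}(\tY)$ below the middle degree, the exact sequence $0\to H^{n+r-1}_{prim}(\PP)\to H^{n+r-1}(\PP-\tY)\to H^{n+r-2}_{var}(\tY)(-1)\to 0$, exactly as in the line bundle case of Section~\ref{linebundles}. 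Here one uses that $\PP$ is Fano (as $\OX(1)$ is ample and $X=G/P$ is Fano, one checks $-K_\PP$ is ample from Proposition~\ref{prop1}(3)), so that $H^{n+r-1}_{prim}(\PP)$ carries no $(p,0)$-part and, more to the point, that the odd-degree primitive cohomology of $\PP$ in fact vanishes: $H^{*}(\PP)$ is a free module over $H^{*}(X)$ on $1,\xi,\dots,\xi^{r-1}$ where $\xi=c_1(\OX(1))$, and $X=G/P$ has only even cohomology, so $H^{odd}(\PP)=0$ and $H^{n+r-1}_{prim}(\PP)=0$. This already upgrades the first map to an isomorphism.

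For the second isomorphism $H^{n+r-2}_{var}(\tY)(-1)\cong H^{n-r}_{var}(Y)(-r)$, the point is to compare the fibration $\pi|_{\tY}\colon \tY\to X$ with $Y$. Away from $Y$, the section $f\in H^0(\PP,\OX(1))$ restricted to a fiber $\PP^{r-1}$ is a nonzero linear form, so $\tY\cap \pi^{-1}(x)$ is a hyperplane $\PP^{r-2}$; over a point of $Y$ the whole fiber $\PP^{r-1}$ lies in $\tY$. Thus $\tY\setminus \pi^{-1}(Y)\to X\setminus Y$ is a $\PP^{r-2}$-bundle, and $\tY$ is the union of this with $\pi^{-1}(Y)\cong P(E^\vee|_Y)$, a $\PP^{r-1}$-bundle over $Y$. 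I would run the Leray spectral sequence of $\pi|_{\tY}$, or equivalently a decomposition-theorem / Gysin argument, to identify the "new" cohomology: the $\PP^{r-2}$-bundle over $X\setminus Y$ contributes only classes pulled back from $X$ (so nothing to the variable part), while the locus $\pi^{-1}(Y)$ contributes $H^{*-?}(Y)$ shifted by the relative dimension $r-1$ of the $\PP^{r-1}$-bundle, twisted accordingly; matching middle degrees, $H^{n+r-2}_{var}(\tY)$ receives exactly $H^{n-r}_{var}(Y)$ placed in the correct bidegree, which is the Tate twist by $r-1$ recorded (combined with the earlier $(-1)$) as the total $(-r)$ relative to $Y$. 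Alternatively, and perhaps more cleanly, one cites Konno \cite{konno} (Proposition~\ref{PCV} and the surrounding results) or the Cayley-trick computation in \cite{konno}, \cite{terasoma}, \cite{Mavlyutov} directly, since Proposition~\ref{prop2} is attributed there; in that case the proof is: combine Proposition~\ref{prop1}(1)--(3) with the stated results of \emph{loc.\ cit.}

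The main obstacle is the bookkeeping of weights and degrees in the second isomorphism — one must pin down precisely why the correction is a Tate twist by $r$ (not $r-1$ or $1$) and why no other summand of $H^{n+r-2}(\tY)$ beyond pullbacks from $X$ and the image of $H^{n-r}(Y)$ survives into the variable part. Concretely this means checking that the connecting maps in the Leray spectral sequence for $\pi|_{\tY}$ (equivalently, the Gysin sequence for the inclusion $\pi^{-1}(Y)\hookrightarrow \tY$ with open complement the $\PP^{r-2}$-bundle) degenerate in the relevant range, which follows from the Hodge-theoretic weight argument (the variable part of $H^{n-r}(Y)$ sits in weight $n-r$ and cannot map to or receive from the pure pieces coming from $X$) together with hard Lefschetz applied fiberwise on the projective bundles. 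Once that degeneration is in place the isomorphism is forced, and the displayed double isomorphism of Proposition~\ref{prop2} follows; the Fano/even-cohomology input for $\PP$ handled above takes care of the first isomorphism with no further work.
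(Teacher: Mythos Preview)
The paper does not prove this proposition; it simply attributes both isomorphisms to Terasoma, Konno, and Mavlyutov. Your outline of the second isomorphism via the Cayley trick---analyzing $\pi|_{\tY}$ as a $\PP^{r-2}$-bundle over $X\setminus Y$ together with the $\PP^{r-1}$-bundle $\pi^{-1}(Y)\to Y$, and running the associated Leray/Gysin argument---is exactly the method of the cited references, and the Tate-twist bookkeeping can be completed as you indicate.

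There is, however, a genuine gap in your argument for the first isomorphism. From the Gysin sequence you correctly extract
\[
0\longrightarrow H^{n+r-1}_{prim}(\PP)\longrightarrow H^{n+r-1}(\PP-\tY)\longrightarrow H^{n+r-2}_{var}(\tY)(-1)\longrightarrow 0,
\]
and you then assert $H^{n+r-1}_{prim}(\PP)=0$ because $H^{odd}(\PP)=0$. That inference is valid only when $n+r-1$ is \emph{odd}. When $n+r-1$ is even, $H^{n+r-1}(\PP)$ sits in even degree and your parity observation is vacuous. In fact the $\xi$-primitive part $H^{n+r-1}(\PP)/\xi\!\cdot\! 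H^{n+r-3}(\PP)$ need not vanish: its dimension is $\dim H^{n+r-1}(\PP)-\dim H^{n+r-3}(\PP)$, and for instance with $X=G(2,5)$ (so $n=6$) and $r=3$ one computes $\dim H^{8}(\PP)=6$ while $\dim H^{6}(\PP)=5$, giving a one-dimensional primitive piece. (Your side remark that $\PP$ is Fano is also not automatic: from $K_\PP\cong\pi^*(K_X\otimes\det E)\otimes\OX(-r)$ one sees $-K_\PP$ can fail to be ample once $\det E$ dominates $-K_X$; but you do not actually use this, so it is harmless.)

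What rescues the application is that $H^*(\PP)$, being generated over $H^*(G/P)$ by $\xi$, consists entirely of Hodge classes; hence the possible extra term $H^{n+r-1}_{prim}(\PP)$ is pure of type $\bigl(\tfrac{n+r-1}{2},\tfrac{n+r-1}{2}\bigr)$ and contributes only to the middle Hodge graded piece. So the identification of $F^{p}/F^{p+1}$ used downstream in Theorem~\ref{VJM} is unaffected away from that single bidegree. You should either (i) restrict the first isomorphism to the case $n+r$ even, (ii) rewrite it as the short exact sequence above, or (iii) note explicitly that the kernel is concentrated in the middle Hodge type and hence irrelevant for the Jacobian-ring description of the other graded pieces.
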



In order to describe the Hodge structure on $H_{var}^{n-r}(Y)$, we consider the following vanishing conditions. 
\begin{equation}
\label{CV2}
\begin{array}{lr}
&H^p(X, \Omega_X^{q-a} \otimes \wedge^{a}E{\otimes S^{l-a}E})=0 \text{ for $p>0, q\geq 0$ and $0\leq a\leq l-1$}\\
\text{or }&H^p(X, \Omega_X^{q-a} \otimes \wedge^{a+1}E{\otimes S^{l-a-1}E})=0 \text{ for $p>0, q\geq 0$ and $0\leq a\leq l-1$}
\end{array}
\end{equation}
\begin{equation}
\label{C1V}
H^1(X, (G\times_{ad{P}}\fp)\otimes S^{k}(E)\otimes \det E\otimes K_X)=0 \text{ for } k\geq r
\end{equation}

\begin{prop}
\label{PCV}
If $E$ satisfies the vanishing condition \ref{CV2}, then $\PP$ satisfies the vanishing condition
\begin{equation}
\label{CV}
H^p(\PP, \Omega_\PP^q\otimes \OX(l))=0 \text{ for $p>0, q\geq 0$ and  $l\geq 1$}  
\end{equation}
\end{prop}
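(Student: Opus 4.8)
# Proof Proposal for Proposition \ref{PCV}

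The plan is to push the vanishing down from $\PP = P(E^\vee)$ to $X$ along $\pi\colon\PP\to X$, using the relative cotangent sequence, the relative Euler sequence, and the relative Bott vanishing theorem.

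First, from $0\to\pi^*\Omega^1_X\to\Omega^1_\PP\to\Omega^1_{\PP/X}\to 0$, taking $q$-th exterior powers gives a finite filtration of $\Omega^q_\PP$ with graded pieces $\pi^*\Omega^{q-a}_X\otimes\Omega^a_{\PP/X}$, $0\le a\le\min(q,r-1)$. Twisting by $\OX(l)$ and running the long exact sequences of the filtration, condition \eqref{CV} is reduced to
$$H^p\bigl(\PP,\ \pi^*\Omega^{q-a}_X\otimes\Omega^a_{\PP/X}\otimes\OX(l)\bigr)=0,\qquad p>0,\ l\ge 1,\ 0\le a\le\min(q,r-1).$$

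Next, compute the relative direct images. The fibres of $\pi$ are $\PP^{r-1}$, and $H^j(\PP^{r-1},\Omega^a(l))=0$ for every $j>0$ when $l\ge 1$; so relative Bott vanishing gives $R^j\pi_*(\Omega^a_{\PP/X}(l))=0$ for $j>0$, while $\pi_*(\Omega^a_{\PP/X}(l))=0$ once $a\ge l$. Thus the terms with $a\ge l$ vanish outright, and for $0\le a\le l-1$ the Leray spectral sequence and the projection formula give
$$H^p\bigl(\PP,\ \pi^*\Omega^{q-a}_X\otimes\Omega^a_{\PP/X}(l)\bigr)\cong H^p\bigl(X,\ \Omega^{q-a}_X\otimes\pi_*\Omega^a_{\PP/X}(l)\bigr).$$
To identify $\pi_*\Omega^a_{\PP/X}(l)$, use the relative Euler sequence $0\to\Omega^1_{\PP/X}\to\pi^*E(-1)\to\OX_\PP\to 0$ and its exterior powers $0\to\Omega^a_{\PP/X}\to\wedge^a\pi^*E(-a)\to\Omega^{a-1}_{\PP/X}\to 0$; pushing forward (again all higher direct images vanish for $l\ge 1$) yields exact sequences of bundles on $X$
$$0\to\pi_*\Omega^a_{\PP/X}(l)\to\wedge^aE\otimes S^{l-a}E\to\pi_*\Omega^{a-1}_{\PP/X}(l)\to 0\qquad(1\le a\le l-1),$$
with $\pi_*\Omega^0_{\PP/X}(l)=S^lE$. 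Splicing these identifies $\pi_*\Omega^a_{\PP/X}(l)$ with a syzygy of the exact Koszul complex $0\to\wedge^lE\to\wedge^{l-1}E\otimes S^1E\to\cdots\to E\otimes S^{l-1}E\to S^lE\to 0$: read from the $\wedge^lE$ end one gets a resolution $0\to\wedge^lE\to\cdots\to\wedge^{a+1}E\otimes S^{l-a-1}E\to\pi_*\Omega^a_{\PP/X}(l)\to 0$, while read from the $S^lE$ end $\pi_*\Omega^a_{\PP/X}(l)$ appears as the kernel of $\wedge^aE\otimes S^{l-a}E\to\wedge^{a-1}E\otimes S^{l-a+1}E$.

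Finally, tensor one of these presentations with $\Omega^{q-a}_X$ and feed it into the hypercohomology spectral sequence on $X$. Using the resolution by the $\wedge^{a+1+j}E\otimes S^{l-a-1-j}E$, the desired $H^p(X,\Omega^{q-a}_X\otimes\pi_*\Omega^a_{\PP/X}(l))=0$ for $p>0$ follows once $H^{p+j}(X,\Omega^{q-a}_X\otimes\wedge^{a+1+j}E\otimes S^{l-a-1-j}E)=0$ for all $j\ge 0$ — which is exactly the second family in \eqref{CV2} (with their index $a$ taken to be $a+j$ and their $q$ taken to be $q+j$); a parallel argument based on the kernel presentation and the tail of the Koszul complex uses the first family, and this is the source of the ``or''. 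The only non-formal point — hence the main obstacle — is the index bookkeeping here: one must line up the twist $\Omega^{q-a}_X$ and the exterior/symmetric degrees running along the Koszul complex with the precise index ranges in \eqref{CV2}, keep track of the (harmless, since $p>0$) $H^0$-contributions and of the degenerate range $a\ge l$, and check that each of the two alternatives genuinely suffices. Everything else is the standard interplay of the relative cotangent sequence, the relative Euler sequence, and relative Bott vanishing; the hypothesis $l\ge 1$ in \eqref{CV} is precisely what forces all higher relative direct images to vanish, so that the problem descends cleanly from $\PP$ to $X$.
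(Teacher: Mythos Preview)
Your overall strategy matches the paper's: filter $\Omega^q_\PP$ via the relative cotangent sequence, compute $R^c\pi_*(\Omega^a_{\PP/X}(l))$ fiberwise by Bott vanishing on $\PP^{r-1}$, and reduce to vanishing on $X$ via Leray and the projection formula. The difference lies entirely in how you control $\pi_*\Omega^a_{\PP/X}(l)$.

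The paper (Lemma~\ref{fiber}) observes directly that the fiber $H^0(\PP^{r-1},\Omega^a(l))$ is an \emph{irreducible} $GL_r$-representation, and hence a \emph{direct summand} of $\wedge^a\CC^r\otimes S^{l-a}\CC^r$ (and, as a quotient, also of $\wedge^{a+1}\CC^r\otimes S^{l-a-1}\CC^r$). Passing to associated bundles for the frame bundle of $E$, this makes $\pi_*\Omega^a_{\PP/X}(l)$ a direct summand of both $\wedge^aE\otimes S^{l-a}E$ and $\wedge^{a+1}E\otimes S^{l-a-1}E$. So a \emph{single} vanishing from either line of \eqref{CV2}, at the given $(p,q,l,a)$, already kills $H^p(X,\Omega^{q-a}_X\otimes\pi_*\Omega^a_{\PP/X}(l))$; no resolution is needed, and this is the source of the ``or''.

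Your Koszul-resolution argument is a legitimate alternative for the \emph{second} family: the left resolution $0\to\wedge^lE\to\cdots\to\wedge^{a+1}E\otimes S^{l-a-1}E\to\pi_*\Omega^a_{\PP/X}(l)\to 0$ pushes the needed vanishing to degrees $H^{p+j}$ with $j\geq 0$, all of which are covered by \eqref{CV2} since $p>0$. But your ``parallel argument'' for the first family has a gap. Using the coresolution $0\to\pi_*\Omega^a_{\PP/X}(l)\to\wedge^aE\otimes S^{l-a}E\to\cdots\to S^lE\to 0$, the hypercohomology spectral sequence forces you toward $H^{p-j}$; once $j\geq p$ you hit $H^0$, and those contributions are \emph{not} harmless. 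With only the first family of \eqref{CV2}, the $E_1$-page collapses to its $H^0$ row and you obtain $H^p(\Omega^{q-a}_X\otimes\pi_*\Omega^a_{\PP/X}(l))\cong H^p$ of the complex of global sections of the Koszul tail tensored with $\Omega^{q-a}_X$ --- and there is no reason that complex of vector spaces should be exact in degree $p$. The fix is precisely the paper's observation: the short exact sequences $0\to\pi_*\Omega^a_{\PP/X}(l)\to\wedge^aE\otimes S^{l-a}E\to\pi_*\Omega^{a-1}_{\PP/X}(l)\to 0$ you already derived come from short exact sequences of $GL_r$-representations and therefore \emph{split} as bundle sequences, so the first-family vanishing applies directly to the summand.
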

\begin{proof}
Consider the exact sequence 
\begin{equation}
0\to \ker \pi_*\to T_\PP\to \pi^*T_X\to 0
\end{equation}
We denote $\ker \pi_*$ by $T_v$ and $(\ker \pi_*)^\vee$ by $\Omega_v$ for simplicity. The bundle $\wedge^q\Omega_\PP$ admits a filtration  with graded pieces $Gr^a\cong \wedge^a \Omega_v\otimes \wedge^{q-a}\pi^*\Omega_X$. It is sufficient to prove $H^p(\PP, Gr^a\otimes \OX(l))=0$. Consider the Leray spectral sequence $$E_2^{b,c}=H^b(X, R^c\pi_*(Gr^a))\to H^{b+c}(\PP, Gr^a).$$
According to projection formula, we have 
\begin{equation}
R^c\pi_*(Gr^a)\cong R^c\pi_*(\Omega_v^{a}\otimes \OX(l))\otimes \wedge^{q-a}\Omega_X
\end{equation}
The fiber of $R^c\pi_*(\wedge^a\Omega_v\otimes \OX(l))$ over a point $x$ is canonically isomorphic to $H^c(\PP(E_x^\vee), \Omega_{\PP(E_x^\vee)}^a\otimes \OX(l))$. If $c>0$ or $l\leq a$, this is zero according to Bott vanishing theorem. 
If $c=0, l\geq a+1$, then $H^0(\PP(E_x^\vee), \Omega_{\PP(E_x^\vee)}^a\otimes \OX(l))$ is an irreducible representation of $GL(E_x)$. Claim this is an irreducible factor of $\wedge^a(E_x)\otimes S^{l-a}(E_x)$ as $GL(E_x)$-representation. The proof of the claim is in the following lemma. 
\end{proof}

\begin{lem}[Bott]
\label{fiber}
Let $W$ be $r$-dimensional vector space. Let $\PP(W^\vee)$ be projective space consisting of the lines in the dual vector space. Assume $1\leq a\leq l-1$ and $\Omega^a\otimes \OX(l)$ has a natrual $GL(W)$-action induced by the tautological action on $\OX(1)$. Then $H^0(\PP(W^\vee), \Omega^a\otimes \OX(l))$ is an irreducible factor of $\wedge^a(W)\otimes S^{l-a}(W)$ or $\wedge^{a+1}(W)\otimes S^{l-a-1}(W)$ as $GL(W)$-representation.  
\end{lem}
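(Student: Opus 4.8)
The plan is to compute $H^0(\PP(W^\vee),\Omega^a\otimes\OX(l))$ directly via the Euler sequence and then identify it inside tensor products of $W$. First I would fix the convention: with $W$ an $r$-dimensional vector space and $\PP(W^\vee)$ the space of lines in $W^\vee$ (equivalently hyperplanes in $W$), the tautological action on $\OX(1)$ is the one for which $H^0(\PP(W^\vee),\OX(1))\cong W$ as $GL(W)$-modules, so that $H^0(\PP(W^\vee),\OX(l))\cong S^l W$. The cotangent bundle sits in the (twisted) Euler sequence $0\to\Omega^1\to \OX(-1)\otimes H^0(\OX(1))^\vee\cong \OX(-1)\otimes W^\vee\to\OX\to 0$, and taking $a$-th exterior powers gives the short exact sequence
\begin{equation}
0\to \Omega^a\to \wedge^a W^\vee\otimes\OX(-a)\to \Omega^{a-1}\to 0.
\end{equation}
Twisting by $\OX(l)$ and taking the long exact sequence in cohomology, $H^0(\Omega^a\otimes\OX(l))$ is the kernel of the $GL(W)$-equivariant map $\wedge^a W^\vee\otimes S^{l-a}W\to H^0(\Omega^{a-1}\otimes\OX(l))$, and inductively the latter is a submodule of $\wedge^{a-1}W^\vee\otimes S^{l-a+1}W$. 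So $H^0(\Omega^a\otimes\OX(l))$ is a $GL(W)$-submodule of $\wedge^a W^\vee\otimes S^{l-a}W$.

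Next I would identify this submodule explicitly. Since $W$ is a $GL(W)$-module rather than just $SL(W)$-module, $\wedge^a W^\vee\cong \wedge^{r-a}W\otimes(\det W)^{-1}$, and the contraction/multiplication structure is clean: the map $\wedge^a W^\vee\otimes S^{l-a}W\to \wedge^{a-1}W^\vee\otimes S^{l-a+1}W$ in the sequence is, up to scalar, the composite of coevaluation $W^\vee\to W^\vee\otimes(W\otimes W^\vee)\to\cdots$, i.e. contraction of one $W^\vee$ index against... more precisely it is the transpose of the wedge-with-Euler-vector-field map, which acts by $\omega\otimes s\mapsto$ (insert the Euler element). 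Decomposing $\wedge^a W^\vee\otimes S^{l-a}W$ into $GL(W)$-irreducibles by the Pieri/Littlewood--Richardson rule (here $\wedge^a W^\vee\otimes S^{l-a}W$ with $a\le l-1$ has exactly the decomposition analogous to $\wedge^a W\otimes S^{l-a}W = S^{(l-a,1^a)}W\oplus S^{(l-a+1,1^{a-1})}W$, twisted by $(\det W)^{-1}$), one sees the map to $\wedge^{a-1}W^\vee\otimes S^{l-a+1}W$ is nonzero on the "hook with longer first row" piece and kills the other; hence the kernel $H^0(\Omega^a\otimes\OX(l))$ is a single irreducible, either $S^{(l-a,1^a)}W\otimes(\det W)^{-1}$ — which appears in $\wedge^a W^\vee\otimes S^{l-a}W$ and, up to the determinant twist one absorbs into normalizing $\OX(1)$, is an irreducible factor of $\wedge^a W\otimes S^{l-a}W$ — or the shifted hook $S^{(l-a-1,1^{a+1})}$-type piece coming from $\wedge^{a+1}W\otimes S^{l-a-1}W$, depending on how one normalizes the $GL(W)$-action on $\OX(1)$ (the statement allows either, which is exactly why both alternatives are listed). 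I would pin down which by a weight computation: the highest weight of $H^0(\Omega^a\otimes\OX(l))$ is read off from the image of the highest-weight vector of $\wedge^a W^\vee\otimes S^{l-a}W$ under the Euler contraction, and comparing with the two hook shapes identifies the factor.

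Alternatively, and perhaps more cleanly, I could invoke the Bott--Borel--Weil theorem on the flag variety $\PP(W^\vee)=GL(W)/P$ for the maximal parabolic: $\Omega^a\otimes\OX(l)$ is the homogeneous bundle induced by a one-dimensional (for the line bundle part) times an exterior power representation of $P$; decomposing the associated bundle into irreducible homogeneous bundles and applying Bott's algorithm identifies the unique dominant piece contributing to $H^0$, whose highest weight is then matched against the hook partitions $(l-a,1^a)$ or $(l-a-1,1^{a+1})$ (each suitably $\det$-twisted). The main obstacle is bookkeeping: getting the $GL$-versus-$SL$ normalization of $\OX(1)$ and the $\det W$ twists exactly right, and carefully tracking which of the (at most two) hook summands survives in the kernel — this is why the lemma is phrased with an "or". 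Everything else is a routine application of the Euler sequence, Pieri's rule, and Bott vanishing (which is already what kills the $c>0$ and $l\le a$ cases in Proposition \ref{PCV}).
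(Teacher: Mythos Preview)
Your approach is the same as the paper's --- Euler sequence, exterior powers, long exact cohomology sequence --- but you have a dualization slip that sends the rest of the argument off course. On $\PP(W^\vee)$ with the convention $H^0(\OX(1))\cong W$ you fixed, the Euler sequence reads
\[
0\to \Omega^1 \to W\otimes \OX(-1)\to \OX\to 0,
\]
with middle term $H^0(\OX(1))\otimes\OX(-1)$, \emph{not} $H^0(\OX(1))^\vee\otimes\OX(-1)$. Taking $a$-th exterior powers and twisting by $\OX(l)$ therefore exhibits $H^0(\Omega^a\otimes\OX(l))$ directly as a $GL(W)$-submodule of $\wedge^a W\otimes S^{l-a}W$, with no $W^\vee$ and no determinant twist. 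The paper's proof is exactly this, in three lines; irreducibility is the Bott input already quoted just before the lemma.

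Your subsequent detour through $\wedge^a W^\vee\cong \wedge^{r-a}W\otimes(\det W)^{-1}$ and the suggestion that the ``or'' reflects a normalization ambiguity are both symptoms of that initial error. The ``or'' in the statement has a different origin: by Pieri, the hook $S^{(l-a,1^a)}W$ is a common summand of both $\wedge^a W\otimes S^{l-a}W$ and $\wedge^{a+1}W\otimes S^{l-a-1}W$, so either description is valid for the same irreducible. This is precisely why condition \eqref{CV2} is phrased as an alternative --- vanishing of either tensor product suffices for the application in Proposition \ref{PCV}. Once you fix the Euler sequence, your Pieri analysis of the kernel is correct and a bit more explicit than what the paper records, but it is not needed for the lemma as stated.
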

\begin{proof}
The Euler sequence
\begin{equation}
0\to \Omega^1\to W\otimes \OX(-1)\to \mathbb{C}\to 0
\end{equation}
is an exact sequence of homogenous $GL(W)$-bundle. It induces an exact sequence
\begin{equation}
0\to \Omega^a\to \wedge^a W\otimes \OX(-a)\to \Omega^{a-1}\to 0
\end{equation}
There is an exact sequence of $GL(W)$-representations
\begin{equation}
0\to H^0(\PP(W^\vee), \Omega^a\otimes \OX(l))\to \wedge^a(W)\otimes S^{l-a}(W)\to H^0(\PP(W^\vee), \Omega^{a-1}\otimes \OX(l))\to0
\end{equation} 
Hence it is an irreducible summand of $\wedge^a(W)\otimes S^{l-a}(W)$.

\end{proof}


\begin{defn}
\label{MJN}
Let $S^k(E)$ be the symmetric product of $E$. Then the coordinate ring of $\PP$ is $R=\oplus_{k\geq 0} H^0(X, S^k(E))$ graded by $k$. Let $M=\oplus_{k\geq r-1} H^0(X, S^{k+1-r}(E)\otimes \det E \otimes K_X)$ be a graded $R$-module with gradings $k$. The Jacobian ideal $J$ is the ideal in $R$ generated by $f$ and $L_Zf, Z\in \fg$. Denote $N_k=H^0(X, E^\vee \otimes S^{k+1-r}(E)\otimes  \det E \otimes K_X)$. There is a map from $N_k$ to $M_k$ defined by pairing the $E^\vee$ component with $f$.
\end{defn}

\begin{thm}
\label{VJM}
If $E$ satisfies the vanishing conditions \eqref{C1V}, \eqref{CV2} for $p, q, l$ in the range $\{1\leq p\leq k+r-1, q=n+r-1-p, l=k+r-p\}\cup\{1\leq p\leq k+r-2, q=n+r-p-2, l=k+r-1-p\}\cup\{1\leq p\leq k+r-2, q=n+r-1-p, l=k+r-1-p\}$ and \eqref{CV2} with $p=1, q-a=n, a=r-2, l-a=k-r+2$, then we have the following description of the Hodge structure of $Y$ 
\begin{equation}
H^{n-r-k, k}_{var}(Y)\cong M^{k+r-1}/N^{k+r-1}f+JM^{k+r-2}
\end{equation}

\end{thm}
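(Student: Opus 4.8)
The plan is to reduce the statement for $Y \subset X$ to the line-bundle case already handled in Theorem \ref{main}, applied to the divisor $\tY = \{f=0\}$ inside the projective bundle $\PP = \PP(E^\vee)$ with the ample line bundle $L = \OX(1)$. By Proposition \ref{prop2} we have $H^{n+r-1}(\PP-\tY) \cong H^{n-r}_{var}(Y)(-r)$ as Hodge structures, so the Hodge filtration piece $F^{n+r-1-m}H^{n+r-1}(\PP-\tY)/F^{n+r-m}$ corresponds to $H^{n-r-k,k}_{var}(Y)$ for the appropriate shift $m = k+r-1$ (the Tate twist by $r$ shifts the Hodge bidegree). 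The strategy is therefore: first express the Jacobian-type data on $\PP$ — namely the ring $R_\PP = \oplus_j H^0(\PP,\OX(j))$, the module $M_\PP = \oplus_j H^0(\PP, K_\PP \otimes \OX(j+1))$, and the ideal $J_\PP$ generated by $f$ and $L_Z f$ for $Z \in \fg$ — in terms of the corresponding sheaf cohomology on $X$ via the projection $\pi$; second, verify that the hypotheses of Theorem \ref{main} for $(\PP, \OX(1))$ at level $m=k+r-1$ follow from the stated vanishing conditions on $E$; third, carefully match the quotient $(M_\PP/J_\PP M_\PP)^m$ with $M^{k+r-1}/(N^{k+r-1}f + J M^{k+r-2})$.

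For the first point I would use the standard projection-formula computations: $\pi_* \OX(j) = S^j(E)$ for $j \geq 0$ and $R^c\pi_*\OX(j) = 0$ for $c>0$, $j \geq 0$, so $H^0(\PP,\OX(j)) = H^0(X, S^j E)$, recovering $R_\PP = R$ in the notation of Definition \ref{MJN}. Using Proposition \ref{prop1}(3), $K_\PP \cong \pi^*(K_X \otimes \det E)\otimes \OX(-r)$, and the same projection formula (now $\pi_*\OX(j-r) = S^{j-r}E$ for $j \geq r$), one gets $H^0(\PP, K_\PP \otimes \OX(j+1)) = H^0(X, S^{j+1-r}E \otimes \det E \otimes K_X)$, which is exactly the graded piece $M_j$. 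The $G$-equivariant structure makes the Lie-derivative operators $L_Z$ on $\PP$ correspond to those on $X$, so $J_\PP$ maps to $J$. The only subtlety is the lowest-degree generators: on $\PP$, the ideal $J_\PP$ is generated by $f \in H^0(\PP, \OX(1))$ and by $L_Z f$; multiplying $f$ into $M_\PP^{m-1} = H^0(X, S^{m-r}E \otimes \det E \otimes K_X)$ lands in $M_\PP^{m}$, but an element of $M^{k+r-2} = H^0(X, S^{k-1}E\otimes \det E\otimes K_X)$ that is a product $f \cdot g$ does not see all of $M^{k+r-1}$ — rather, $f$ acting on $M^{m-1}$ gives precisely the image of the map $N_m \to M_m$ of Definition \ref{MJN} (pairing the $E^\vee$-factor with $f \in H^0(X,E)$), because $S^{m-r}E \otimes \det E \otimes K_X$ sits inside $E^\vee \otimes S^{m+1-r}E \otimes \det E \otimes K_X$ after tensoring the Euler-type relation $E^\vee \otimes E \to \OX$. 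This is exactly why the answer has the shape $M^{k+r-1}/(N^{k+r-1}f + JM^{k+r-2})$ rather than $M^{k+r-1}/J M^{k+r-2}$.

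For the second point, the vanishing condition \eqref{CV} for $\PP$ that Theorem \ref{main} requires is supplied by Proposition \ref{PCV} from \eqref{CV2}; one has to check that the finite list of triples $(p,q,l)$ appearing in the hypothesis of Theorem \ref{main} applied to $(\PP,\OX(1))$ at degree $m = k+r-1$ and dimension $\dim\PP = n+r-1$ translates — via the Leray/filtration argument inside the proof of Proposition \ref{PCV} — into precisely the listed range of $(p,q,l)$ for \eqref{CV2}; the extra isolated condition \eqref{CV2} with $p=1,\ q-a=n,\ a=r-2,\ l-a=k-r+2$ is what handles the boundary graded piece $Gr^{r-1}$ of $\wedge^{q}\Omega_\PP$ that is responsible for the surjectivity of $\fg \otimes M^{m-1}_\PP \to H^0(\PP,\Omega_\PP^{n+r-2}((m)\tY))$ modulo the $N f$ correction. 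Likewise \eqref{C1V} is just \eqref{C1} for $\PP$ rewritten on $X$ using $K_\PP \otimes \OX(m) \cong \pi^*(K_X\otimes\det E)\otimes\OX(m-r)$ and pushing forward $(G\times_{ad P}\fp)$-twisted bundles, noting $G\times_{ad P}\fp$ pulls back from $X$ and its higher pushforwards contribute nothing in the relevant range.

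The main obstacle I expect is the bookkeeping in the third point: matching the cokernel on $\PP$ with the claimed $X$-side expression requires chasing the Euler sequence $0 \to \Omega_v \to \pi^*E^\vee \otimes \OX(-1) \to \OX \to 0$ (equivalently its dual) through the pushforward to identify $JM^{k+r-2}$ inside $M_\PP$ with the image of $\fg \otimes H^0(X, T_X\otimes S^{k-1}E\otimes\det E\otimes K_X)$ composed with multiplication by $f$, and simultaneously to see that $N^{k+r-1}f$ is exactly the part of $J_\PP M_\PP^{m-1}$ coming from the vertical directions $T_v$ rather than from $\fg$. Getting the Tate twist and the index shift $m = k+r-1$ consistent across Proposition \ref{prop2}, the $K_\PP$ formula, and Theorem \ref{main}'s indexing convention is where the routine-looking computation can easily go wrong, so I would set up the degree dictionary once, carefully, before invoking anything. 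Everything else — the isomorphism $\alpha_m$ on $\PP$, the Cartan-formula identity \eqref{gamma}, and the surjectivity of $\fg \otimes (\text{sections}) \to H^0(\text{tangent twist})$ — is imported verbatim from the proof of Theorem \ref{main} and Proposition \ref{localcal}.
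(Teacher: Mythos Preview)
Your overall strategy—pass to $\PP=\PP(E^\vee)$ via the Cayley trick (Proposition~\ref{prop2}) and compute the graded pieces of $H^{n+r-1}(\PP-\tY)$ through rational forms—is exactly what the paper does. But the framing ``apply Theorem~\ref{main} to $(\PP,\OX(1))$'' does not work, and this is more than bookkeeping. Theorem~\ref{main} is stated for a homogeneous space $G/P$, with condition~\eqref{C1} phrased in terms of the bundle $G\times_{adP}\fp$; the projective bundle $\PP$ is in general \emph{not} of the form $G'/P'$ (e.g.\ when $E$ is a sum of line bundles, $P$ acts on the fibre $\PP^{r-1}$ only through a torus). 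More concretely, with your chosen $J_\PP=\langle f,\,L_Zf:Z\in\fg\rangle$, the map $\fg\to H^0(\PP,T_\PP)$ lands entirely in $H^0(\PP,\pi^*T_X)$ and never touches the vertical bundle $T_v$, so the analogue of \eqref{C1} fails and the resulting quotient $(M_\PP/J_\PP M_\PP)^m$ equals $M^m/JM^{m-1}$—the wrong answer, too large by exactly the $N^m f$ relations. Relatedly, your claim that ``$f$ acting on $M^{m-1}$ gives precisely the image of the map $N_m\to M_m$'' is false: only the inclusion $fM^{m-1}\subset N^m f$ holds (this is the content of the Remark following the theorem), and the cokernel of that inclusion is what the vertical directions contribute.

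The paper therefore does not invoke Theorem~\ref{main} but reruns its argument on $\PP$ with one structural change. After the vanishing conditions~\eqref{CV2} (via Proposition~\ref{PCV}) give the surjection $M^k\to F^{n+r-1-k}/F^{n+r-k}$ with kernel $dH^0(\PP,\Omega_\PP^{n+r-2}(k\tY))$, one splits $T_\PP$ by $0\to T_v\to T_\PP\to\pi^*T_X\to 0$. The horizontal piece $H^0(\PP,\pi^*T_X\otimes K_\PP\otimes\OX(k))$ is hit surjectively by $\fg\otimes M^{k-1}$ under~\eqref{C1V}, and its image under $d$ is $JM^{k-1}$. The vertical piece $H^0(\PP,T_v\otimes K_\PP\otimes\OX(k))$ is a quotient of $N^k$ via the Euler sequence, and its image under $d$ is $N^kf$. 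The isolated condition~\eqref{CV2} with $p=1,\,q-a=n,\,a=r-2,\,l-a=k-r+2$ is used to kill $H^1(\PP,T_v\otimes K_\PP\otimes\OX(k))$, so that these two pieces together exhaust $H^0(\PP,T_\PP\otimes K_\PP\otimes\OX(k))$. Your third point already lists the right ingredients (Euler sequence, $T_v$ versus $\fg$); the fix is to make that the argument itself rather than a patch on a black-box citation of Theorem~\ref{main}.
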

\begin{proof}
According to Proposition \ref{PCV}, we have a surjective map $H^0(\PP, K_\PP\otimes \OX(k+1))\to F^{n+r-1-k}H^{n+r-1}(\PP-\tY)$ with kernel equal to $dH^0(\PP, \Omega^{n-r-2}_\PP\otimes \OX(k))$. Using Leray spectral sequence, we have $H^0(\PP, K_\PP\otimes \OX(k+1))$ is nonzero only if $k\geq r-1$ and isomorphic to  $M^{k}$.
Now we describe $H^0(\PP, \Omega^{n-r-2}_\PP\otimes \OX(k))$. There is a natural isomorphism $\Omega^{n-r-2}_\PP\otimes \OX(k)\cong T_\PP\otimes K_\PP\otimes \OX(k)$. So we consider the exact sequence 
\begin{equation}
0\to T_v \to T_\PP\to \pi^*T_X\to 0
\end{equation} 
twisted by $K_\PP\otimes \OX(k)$.
Here $T_v=\ker \pi_*$. This induces a long exact sequence
\begin{equation}
0\to H^0(\PP, T_v\otimes K_\PP\otimes \OX(k))\to H^0(\PP, T_\PP\otimes K_\PP\otimes \OX(k))\to H^0(\PP, \pi^*T_X\otimes K_\PP\otimes \OX(k))\to H^1(\PP, T_v\otimes K_\PP\otimes \OX(k))
\end{equation}
First we claim $H^1(\PP, T_v\otimes K_\PP\otimes \OX(k))=0$. There is an exact sequence
\begin{eqnarray*}
0\to H^1(X, R^0\pi_*(T_v\otimes \OX(k-r))\otimes K_X\otimes \det E\to H^1(\PP, T_v\otimes K_\PP\otimes \OX(k))\to \\
 H^0(X, R^1\pi_*(T_v\otimes \OX(k-r))\otimes K_X\otimes \det E)
\end{eqnarray*}
We have $R^1\pi_*(T_v\otimes \OX(k-r))=0$ according to Bott's vanishing theorem on $\PP^{r-1}$. According to Lemma \ref{fiber}, the bundle $R^0\pi_*(T_v\otimes \OX(k-r))\otimes K_X\otimes \det E$ is a direct summand of $\wedge^{r-2}E\otimes S^{k-r+2}E\otimes K_X$. So $H^1(X, R^0\pi_*(T_v\otimes \OX(k-r))\otimes K_X\otimes \det E)$ also vanishes due to condition \ref{CV2}.

Now we describe $H^0(\PP, T_v\otimes K_\PP\otimes \OX(k))$. Consider the Euler sequence 
\begin{equation}
0\to\CC\to \pi^*E^\vee\otimes\OX(1)\to T_v\to 0
\end{equation}
twisted by $K_\PP\otimes \OX(k)$. Then we have an surjective map
\begin{equation}
N^{k}\to H^0(\PP, T_v\otimes K_\PP\otimes \OX(k))
\end{equation}
since $H^1(\PP, K_\PP\otimes\OX(k))=0$.
The image under the map $d\colon H^0(\PP, \Omega^{n+r-2}(k\tY))\to M^k/M^{k-1}f$ is described as follows. The paring of $E^\vee \otimes E\to \CC$ induces a pairing $H^0(X, E^\vee \otimes S^{k+1-r}(E)\otimes  \det E \otimes K_X)\otimes H^0(X, E)\to H^0(X, S^{k+1-r}(E)\otimes  \det E \otimes K_X) $. Since $f\in H^0(X, E)$, this gives a map $N_k\to M_k$ by pairing with $f$.

Next we describe $H^0(\PP, \pi^*T_X\otimes K_\PP\otimes \OX(k))$. It is zero when $k=r-1$ and isomorphic to $H^0(X, T_X\otimes S^{k-r} (E)\otimes K_X\otimes \det E))$ when $k\geq r$. Consider the exact sequence 
\begin{equation}
0\to \fp\to \fg\to \fg/\fp\to 0
\end{equation}
twisted by $S^{k-r} (E)\otimes K_X\otimes \det E$.
Under condition \ref{C1V}, we have a surjective map 
\begin{equation}
\fg \otimes H^0(X, S^{k-r} (E)\otimes K_X\otimes \det E)\to H^0(X, T_X\otimes S^{k-r} (E)\otimes K_X\otimes \det E))
\end{equation}
The image of $\fg \otimes H^0(X, S^{k-r} (E)\otimes K_X\otimes \det E)$ under $d$ to $M^k/M^{k-1}$ is described as Lie derivative of $\fg$ on $M^k$. So $F^{n+r-k-1}H^{n+r-1}(X-Y)/F^{n+r-k}H^{n+r-1}(X-Y)\cong M^{k}/N^{k}f+JM^{k-1}$. 
\end{proof}

\begin{rem}
We have a natural pairing $E\otimes S^{k-r}E\to S^{k-r+1}E$. This induces a map $M_{k-1}\to N_{k}$ and commute with the pairing with $H^0(X, E)$. So the Jacobian ideal $J$ in Definition \ref{MJN} can be replaced by $J^\prime$ generated by $L_Zf$ for $Z\in \fg$.
\end{rem}

\section{Hypersurfaces in log homogenous varieties}
\label{log}
Let $X^n$ be a smooth projective variety with simple normal crossing divisor $D$. The log tangent bundle $T_X(-\log D)$ is a subsheaf of $T_X$ defined as follows. If $z_1, \cdots, z_n$ is the local coordinate of $X$ and $D$ is the hyperplanes defined by $z_1=0,\cdots, z_r=0$, then the generating sections of $T_X(-\log D)$ are $z_1\partial_1,\cdots, z_r\partial_r, \partial z_{r+1}, \cdots, \partial z_n$. We say $X$ is log homogenous if $T_X(-\log D)$ is globally generated and log parallelizable if $T_X(-\log D)$ is trivial. Toric varieties and flag varieties are examples of log homogenous varieties. See \cite{brionlog, brionvan} for discussion of log homogenous varieties. Let $\fg$ be $H^0(X, T_X(-\log D))$ and $G$ be a corresponding Lie group making $X$ as an $G$-variety. Let $X_0=X-D$ be the open $G$-orbit. Let $L$ be a $G$-equivariant line bundle on $X$. The section $f\in H^0(X, L)$ defines a hypersurface $Y$. We say it is nondegenerate if $Y+D$ is still simple normal crossing. Let $Y_0=Y-D$. In this section we discuss similar Jacobian ring description of Hodge groups of $X_0-Y_0$. It is a straightforward result following Batyrev's work on the mixed hodge structures of affine hypersurfaces in algebraic tori \cite{batyrev}.

The same as definition \ref{maindef}. Let $R$ be the graded ring $R=\oplus_{k\geq 0} H^0(X, L^k)$. The generalized Jacobian ideal $J$ is the graded ideal generated by $f, L_{Z} f$ for $Z\in \fg$. Then $M=\oplus_{k\geq 0} H^0(X, K_X(D)\otimes L^{k+1})$ is a graded $R$-module. Let $\Omega_{X,D}=\Omega_X(\log D)$. Define $R_X$ to be the kernel of action map $\OX_X\otimes \fg\to T_X(-\log D)$ following the notation in \cite{brionvan}. We consider the following two vanishing conditions. 
\begin{equation}
\label{Clog}
H^p(X, \Omega_{X,D}^q\otimes L^{l})=0 \text{ for $p>0, q\geq 0, l\geq 1$} 
\end{equation}
\begin{equation}
\label{C1log}
H^1(X, R_X \otimes  K_X(D) \otimes L^{k})=0 \text{ for } k\geq 1
\end{equation}

\begin{thm}
\label{mainlog}
Under conditions \eqref{C1log} and \eqref{Clog} for $\{p=k-l+1, q=n-k+l-1, 1\leq l \leq k\}\cup \{p=k-l+1, q=n-k+l, 0\leq l \leq k\}\cup \{p=k-l, q=n-k+l-1, 1\leq l \leq k-1\}\cup\{p=k-l, q=n-k+l, 1\leq l \leq k-1\}$, there is an isomorphism $$(M/JM)^k\to F^{n-k}H^{n}(X_0-Y_0)/F^{n-k+1}H^{n}(X_0-Y_0).$$
\end{thm}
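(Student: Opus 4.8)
The plan is to follow the same strategy that proves Theorem \ref{main}, transplanted to the open orbit $X_0 = X - D$ and the logarithmic de Rham complex, exactly as Batyrev does for affine hypersurfaces in tori \cite{batyrev}. First I would set up the analogue of the map $\alpha_k$: there is a natural map
$$
\alpha_k \colon H^0(X, \Omega_{X,D}^n((k+1)Y)) \to F^{n-k}H^n(X_0 - Y_0),
$$
obtained by viewing a log $n$-form with a pole of order $k+1$ along $Y$ as a class on the affine variety $X_0 - Y_0$, with $d H^0(X, \Omega_{X,D}^{n-1}(kY))$ in its kernel; here $\Omega_{X,D}^n = \Omega_X^n(\log D) = K_X(D)$, so $H^0(X, \Omega_{X,D}^n((k+1)Y)) = H^0(X, K_X(D)\otimes L^{k+1}) = M^k$. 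The pole-order filtration on the meromorphic log de Rham complex computing $H^n(X_0 - Y_0)$ degenerates at $E_1$ exactly under the vanishing hypotheses \eqref{Clog} in the listed range; the two families $\{p = k-l+1,\, q = n-k+l-1\}$ and $\{p = k-l+1,\, q = n-k+l\}$ (with the appropriate ranges of $l$) are precisely what make $\alpha_k$ surjective with kernel the exact forms, as in Theorem 6.5 of \cite{Voisin}, while the two families with $p = k-l$ control the surjectivity of $\alpha_{k-1}$ onto the next step $F^{n-k+1}H^n(X_0-Y_0)$.

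Next, as in the proof of Theorem \ref{main}, I would identify the exact forms modulo lower pole order with the Jacobian ideal contribution. The role played by the sequence $0 \to \fp \to \fg \to \fg/\fp \to 0$ in the flag case is played here by
$$
0 \to R_X \to \OX_X \otimes \fg \to T_X(-\log D) \to 0,
$$
the defining sequence of $R_X$ from \cite{brionvan}. Twisting by $K_X(D)\otimes L^k$ and taking cohomology, condition \eqref{C1log} forces the connecting map into $H^1(X, R_X \otimes K_X(D)\otimes L^k)$ to vanish, hence
$$
\fg \otimes H^0(X, K_X(D)\otimes L^k) \longrightarrow H^0(X, T_X(-\log D)\otimes K_X(D)\otimes L^k) \cong H^0(X, \Omega_{X,D}^{n-1}(kY))
$$
is surjective. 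So every $\gamma \in H^0(X, \Omega_{X,D}^{n-1}(kY))$ is of the form $\sum_i \iota_{Z_i}(\Omega T_i / f^k)$ with $Z_i$ a basis of $\fg$ and $T_i \in M^{k-1}$, and Cartan's formula plus the principal-bundle adjunction computation (verbatim the argument in the proof of Proposition \ref{localcal}, now with the log canonical bundle $K_X(D)$ in place of $K_X$) shows that $d\gamma$ represents exactly an element of $J M^{k-1}$ in $M^k/M^{k-1}f$. Since $\alpha_{k-1}$ is surjective onto $F^{n-k+1}H^n(X_0-Y_0)$ by the $p = k-l$ part of the hypotheses, the kernel of the induced map $\bar\alpha_k\colon M^k \to F^{n-k}H^n(X_0-Y_0)/F^{n-k+1}H^n(X_0-Y_0)$ is precisely $N^{k-1}f + JM^{k-1}$ — wait, in the line bundle case there is no $N$, so it is just $f M^{k-1} + J M^{k-1} = J M^{k-1}$ (note $f$ itself is one of the generators of $J$, so $fM^{k-1}\subset JM^{k-1}$), giving $(M/JM)^k \xrightarrow{\ \sim\ } F^{n-k}H^n(X_0-Y_0)/F^{n-k+1}H^n(X_0-Y_0)$.

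The main obstacle I anticipate is bookkeeping the two resolutions of the log forms correctly: $\Omega_{X,D}^{n-k,c}(\log Y) \cong \Omega_{X,D}^{n-k}(Y)$ has the Koszul-type resolution $0 \to \Omega_{X,D}^{n-k}(Y) \to \Omega_{X,D}^{n-k+1}(2Y) \to \cdots$ (forms log along $D$ but with poles along $Y$), and one must feed the right graded pieces of this into the spectral sequence so that the indices $(p,q,l)$ in \eqref{Clog} land exactly as stated — this is the same indexing juggling as in the second Remark after Theorem \ref{main}, and doing it for the logarithmic complex rather than the ordinary one is where sign/shift errors creep in. A secondary point to be careful about is that the Gysin/residue sequence relating $H^n(X_0-Y_0)$, $H^n_{prim}$-type pieces of $X_0$, and $H^{n-1}_{var}(Y_0)$ behaves as in the affine-torus case of \cite{batyrev}: since $X_0$ is affine its cohomology in degree $n$ need not vanish, so strictly speaking the statement is about the Hodge filtration on $H^n(X_0-Y_0)$ itself, and the Fano-type simplification ``$F^0 = F^1$'' used in Section \ref{linebundles} is not available and is not needed — once that is acknowledged the argument goes through unchanged.
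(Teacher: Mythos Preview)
Your proposal is correct and assembles the same ingredients as the paper (the defining sequence of $R_X$, twisting by $K_X(D)\otimes L^k$, condition~\eqref{C1log} for surjectivity onto $H^0(X,\Omega_{X,D}^{n-1}(kY))$, and the Cartan-formula computation identifying the image of $d$ with $JM^{k-1}$). The second half of your argument matches the paper verbatim.

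The first half differs in one technical choice. You propose the Voisin-style route of Theorem~\ref{main}: build $\alpha_k$ directly and control it via the resolution $0\to\Omega_{X,D}^{n-k}(Y)\to\Omega_{X,D}^{n-k+1}(2Y)\to\cdots$ of the closed log forms. The paper instead does what you flag as the ``obstacle'' in your last paragraph: it first uses degeneration of the Hodge--de~Rham spectral sequence for the SNC divisor $Y+D$ to write $F^{n-k}/F^{n-k+1}\cong H^k(X,\Omega_X^{n-k}(\log(Y+D)))$, and then applies Batyrev's \emph{quotient} exact sequence (Theorem~6.2 of \cite{batyrev})
\[
0\to\Omega_X^{n-k}(\log(Y+D))\to\Omega_{X,D}^{n-k}(Y)\to\Omega_{X,D}^{n-k+1}(2Y)/\Omega_{X,D}^{n-k+1}(Y)\to\cdots\to\Omega_{X,D}^n((k+1)Y)/\Omega_{X,D}^n(kY)\to0
\]
together with a spectral-sequence argument to obtain
\[
H^k(X,\Omega_X^{n-k}(\log(Y+D)))\cong\frac{H^0(X,\Omega_{X,D}^n((k+1)Y))}{H^0(X,\Omega_{X,D}^n(kY))+dH^0(X,\Omega_{X,D}^{n-1}(kY))}.
\]
This is exactly the Saito-type argument discussed in the second Remark after Theorem~\ref{main}, and it explains the presence of the extra family $\{p=k-l+1,\,q=n-k+l\}$ in the hypotheses: killing $H^{k-l}$ of the quotient $\Omega_{X,D}^{n-k+l}((l+1)Y)/\Omega_{X,D}^{n-k+l}(lY)$ requires vanishing of $H^{k-l+1}(X,\Omega_{X,D}^{n-k+l}\otimes L^l)$, which is that family. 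Your Voisin-style route would not need it and would prove a marginally stronger statement; the paper opts for the Batyrev formulation so that the toric argument of \cite{batyrev} carries over line for line. (Incidentally, your attribution of the four families to $\alpha_k$ versus $\alpha_{k-1}$ is slightly off: compare with the three-family hypothesis of Theorem~\ref{main}.)
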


\begin{proof}
The proof follows directly from section 6 of \cite{batyrev}. Since $Y+D$ is simple normal crossing, the Hodge to de-Rham spectral sequence degenerate at $E_1$-page. So $F^{n-k}H^{n}(X_0-Y_0)/F^{n-k+1}H^{n}(X_0-Y_0)\cong H^{k}(X, \Omega_{X}^{n-k}(\log (Y+D)))$. There is an exact sequence (Theorem 6.2 in \cite{batyrev})
\begin{eqnarray*}
0\to \Omega_{X}^{n-k}(\log (Y+D))\to \Omega_{X,D}^{n-k}(Y)\xrightarrow{d} \Omega_{X,D}^{n-k+1}(2Y)/\Omega_{X,D}^{n-k+1}(Y)\xrightarrow{d}  \\\cdots \xrightarrow{d}  \Omega_{X,D}^{n}((k+1)Y)/\Omega_{X,D}^{n}(kY)\to 0
\end{eqnarray*}
The assumption \eqref{Clog} implies $H^{k}(X, \Omega_{X,D}^{n-k}(Y))=H^{k-1}(X, \Omega_{X,D}^{n-k}(Y))=0$, $$H^{k-l}(X, \Omega_{X, D}^{n-k+l}((l+1)Y)/\Omega_{X,D}^{n-k+l}(lY))=0, 1\leq l\leq k-1$$ $$H^{k-l-1}(X, \Omega_{X,D}^{n-k+l}((l+1)Y)/\Omega_{X,D}^{n-k+l}(lY))=0, 1\leq l\leq k-2$$ and also subjectivity of the maps
$$
H^0(X, \Omega_{X,D}^{n-1}(kY))\to H^0(X, \Omega_{X,D}^{n-1}(kY)/\Omega_{X,D}^{n-1}((k-1)Y))
$$
$$
H^0(X, \Omega_{X,D}^{n}((k+1)Y))\to H^0(X, \Omega_{X,D}^{n}((k+1)Y)/\Omega_{X,D}^{n-1}((kY)).
$$
A standard spectral sequence argument shows 
\begin{equation}
H^{k}(X, \Omega_{X}^{n-k}(\log (Y+D)))\cong {H^0(X, \Omega_{X,D}^{n-1}((k+1)Y))\over H^0(X, \Omega_{X,D}^{n}(kY))+dH^0(X, \Omega_{X,D}^{n-1}(kY))}
\end{equation}
Using isomorphism $\Omega_{X,D}^{n-1}\cong T_X(-\log D)\otimes K_X(D)$ and the same calculation in the proof of Proposition \ref{map}, we prove the theorem.
\end{proof}
\begin{rem}
The vanishing condition \eqref{Clog} with $p>q$ is proved in \cite{brionvan}. See also \cite{brionvan} or \cite{norimatsu} for the log version of Lefschetz theorem for the Hodge structures other than middle dimension case.
\end{rem}
\begin{rem}
When $X$ is log parallelizable, the condition \eqref{Clog} always holds and $M =\oplus_{k\geq 0} H^0(X, L^{k+1})$ is the maximal homogeneous ideal in $R$. See \cite{brionlog} for classification of log parallelizable varieties. In toric case, this is Theorem 6.9 in \cite{batyrev}. The proof given here is the same as the proof in \cite{batyrev}.
\end{rem}



\section{Hodge conjecture for very generic hypersurfaces}
\label{Hodgeconjecture}
This section is another application of the Jacobian ring for generalized flag varieties. We prove that Hodge conjecture holds for very generic hypersurfaces in flag varieties with the vanishing conditions in section \ref{linebundles}.  
\begin{thm}[\cite{carlson2017period}, Corollary 7.5.2]
\label{NL}
Let $X=G/P$ be a generalized flag variety with odd dimension $n=2k+1$. Let $L$ be an line bundle on $X$ satisfying vanishing conditions \eqref{C} for $p,q,l$ in the range $1\leq p\leq k, q=n-p, l=k-p+1$ and $H^0(X, K_X\otimes L^k)\neq 0$. Then for $f\in H^0(X, L)$ outside a countable union of proper subvarieties, we have for the hypersurface $Y_f$
\begin{equation}
H^{k,k}(Y_f, \QQ)=\Ima \{H^{k,k}(X, \QQ)\to H^{2k}(Y_f,\QQ)\}
\end{equation}
\end{thm}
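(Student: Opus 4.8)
The plan is to follow the classical Noether--Lefschetz strategy adapted to the flag-variety setting, using Theorem \ref{main} (applied in the Calabi-Yau-type degree $L=K_X^{-1}$-agnostic form, i.e.\ just with the stated vanishing conditions) as the algebraic input that controls the infinitesimal behaviour of the Hodge locus. First I would set up the variational Hodge structure: over the open set $B = H^0(X,L)_{sm}$ of sections with smooth zero locus, consider the local system $\mathcal{H}^{n-1}_{var}(Y_f)$ and its Hodge filtration. The middle-dimensional primitive cohomology $H^{k,k}_{prim}(Y_f)$ is the relevant piece, and a class stays of type $(k,k)$ along a tangent direction $v\in H^0(X,L)$ precisely when the cup product with the Kodaira--Spencer class annihilates it, i.e.\ the obstruction lives in $H^{k-1,k+1}_{var}(Y_f)$. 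The key point is that by Proposition \ref{map} and Theorem \ref{main}, both the relevant graded pieces $F^{k}/F^{k+1} \cong (M/JM)^{k'}$ (for the appropriate $k'$) and the infinitesimal period map are computed by the multiplication maps in the generalized Jacobian ring $R/J$.

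The heart of the argument is the \emph{infinitesimal Noether--Lefschetz / explicit Hodge locus} step: I would show that the Hodge locus $\mathrm{NL} = \{f \in B : H^{k,k}(Y_f,\QQ) \supsetneq \mathrm{Im}(H^{k,k}(X,\QQ))\}$ is a \emph{countable} union of closed analytic subvarieties, each of \emph{positive codimension} in $B$. Countability is standard (it is indexed by integral classes, as in \cite{carlson2017period}, and each component is cut out by the condition that a fixed integral class remain Hodge, which is analytic by the theorem of Cattani--Deligne--Kaplan, already invoked in the cited Corollary 7.5.2). The real content is positive codimension: a component through a generic $f$ has codimension equal to the rank of the map $H^0(X,L) \to \mathrm{Hom}(H^{k,k}_{var,\mathrm{prim}}(Y_f), H^{k-1,k+1}_{var}(Y_f))$ given by cup product with Kodaira--Spencer, composed with projection killing the sublattice of "always Hodge" classes. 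I would argue this rank is strictly positive — indeed, that the Hodge locus is a proper subvariety — by exhibiting a single direction $v$ and a single primitive class that fails to stay Hodge; via Theorem \ref{main} this reduces to showing that multiplication $(M/JM)^{k'} \times (R/J)^{d} \to (M/JM)^{k'+d}$ (where $d=\deg L$ in the ambient coordinate ring $R'$ of the relevant Remark) is \emph{nonzero} on the relevant graded piece, which follows because $(R/J)$ is a nonzero graded ring containing $(R/J)^0=\CC$ and the top piece $(M/JM)^{n-1}$ is nonzero precisely when $H^0(X,K_X\otimes L^k)\neq 0$ — exactly the stated hypothesis. Once the Hodge locus is a countable union of proper subvarieties, its complement is the very generic locus, and there $H^{k,k}(Y_f,\QQ)$ equals the image of $H^{k,k}(X,\QQ)$; but every class in that image is algebraic (it is the restriction of an algebraic class on the smooth projective $X=G/P$, whose rational Hodge classes are all algebraic since $G/P$ has a cellular decomposition and its cohomology is generated by Schubert classes), so the Hodge conjecture holds for such $Y_f$.

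I would organize the write-up as: (1) recall from \cite{carlson2017period}, Corollary 7.5.2, that the statement follows once one knows the components of the Hodge locus have positive codimension, together with the fact that Hodge classes on $G/P$ are algebraic; (2) identify, via Propositions \ref{map} and \ref{localcal} and Theorem \ref{main}, the infinitesimal invariant of the Hodge locus with a multiplication map in $R/J$; (3) verify the nonvanishing of that multiplication map using the stated hypothesis $H^0(X,K_X\otimes L^k)\neq 0$ — so the "middle" graded piece $(M/JM)^k$ is nonzero and the ring structure of $R/J$ (a graded quotient of the section ring, nonzero in degree $0$) propagates nonvanishing; (4) conclude. Most of this is bookkeeping once the dictionary is in place; the substantive references are Theorem \ref{main} and the cited Corollary.

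The main obstacle I anticipate is step (2)--(3): making precise that the rank of the infinitesimal period map (cup product with Kodaira--Spencer, on primitive classes, modulo the ambient image) is genuinely positive, rather than merely that \emph{some} multiplication in $R/J$ is nonzero. One must be careful that the relevant graded pieces of $M/JM$ that carry the primitive $(k,k)$ and $(k-1,k+1)$ parts are the \emph{correct} ones (there is an index shift: in the CY/odd-dimensional normalization used here, $H^{k,k}_{var} \cong (M/JM)^{k_0}$ and $H^{k-1,k+1}_{var} \cong (M/JM)^{k_0+d}$ for a specific $k_0$ depending on how $K_X$ compares to $L$), and that these are both nonzero under the hypotheses — which is exactly why the hypothesis $H^0(X,K_X\otimes L^k)\neq 0$ is imposed, guaranteeing the top nonvanishing of $M$ and hence, by the graded-module structure over the nonzero graded ring $R/J$, nonvanishing of the intermediate pieces and nontriviality of multiplication between them. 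Checking that no degenerate situation (e.g.\ $H^{k-1,k+1}_{var}=0$, which would force the full primitive cohomology to be of Hodge--Tate type and make the naive argument collapse) actually occurs, and handling that case separately if it does, is the delicate point; in that degenerate case the conclusion is in fact immediate because then $H^{k,k}_{prim}$ is the \emph{entire} primitive cohomology and one argues directly that these classes are algebraic via the Jacobian-ring/Hodge-theoretic identification or notes the locus is everything, but pinning this down cleanly requires care.
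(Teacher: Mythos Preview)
Your overall framework is right --- Noether--Lefschetz locus, countability, infinitesimal period map realized as multiplication in the Jacobian ring --- but the core step has a genuine gap. You write that positive codimension of the Hodge locus follows from ``exhibiting a single direction $v$ and a single primitive class that fails to stay Hodge''. That is not enough. The Noether--Lefschetz locus is a union of components indexed by integral classes $\lambda$, and to conclude that \emph{every} such component is proper you must show that \emph{every} nonzero primitive $(k,k)$ class $\lambda$ satisfies $\nabla_v\lambda\neq 0$ for some $v$. In Jacobian-ring terms this is: no nonzero element of the middle graded piece of $M/JM$ is annihilated by all of $H^0(X,L)$. Your ``nonzero multiplication'' argument only handles one $\lambda$; a different class $\lambda'$ could in principle stay Hodge over all of $B$, making $\mathrm{NL}=B$. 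Your final paragraph senses trouble but the proposed fix (degenerate case $H^{k-1,k+1}_{var}=0$) addresses a different issue.

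The paper's proof is much shorter and sidesteps this entirely. It invokes Theorem 7.5.1 and Corollary 7.5.2 of \cite{carlson2017period} directly: those results already package the symmetrizer-lemma / Macaulay-duality machinery and reduce the Noether--Lefschetz statement to surjectivity of the single multiplication map
\[
H^0(X,L)\otimes H^0(X,K_X\otimes L^{k})\longrightarrow H^0(X,K_X\otimes L^{k+1}),
\]
together with surjectivity of $\alpha_k$ (which follows from the stated vanishing \eqref{C}). The paper then proves this surjectivity in one line of representation theory: by Borel--Weil, each of the three spaces is an irreducible $G$-module, the multiplication map is $G$-equivariant, and it is nonzero as soon as the source is nonzero --- which is exactly the hypothesis $H^0(X,K_X\otimes L^k)\neq 0$. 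Hence surjective. You never isolate this surjectivity criterion, and you miss the irreducibility argument that makes it immediate for $G/P$; that is the main idea the paper supplies beyond the general Noether--Lefschetz template.
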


\begin{proof}
The vanishing condition \eqref{C} for $1\leq p\leq k, q=n-p, l=k-p+1$ implies that $\alpha_k\colon H^0(X, K_X\otimes L^{k+1})\to F^k H^{2k}(Y_f)$ is surjective.
According to Theorem 7.5.1, Corollary 7.5.2 in \cite{carlson2017period}, we only need to check 
\begin{equation}
\label{multiplicationmap}
H^0(X, L)\otimes H^0(X, K_X\otimes L^k)\to H^0(X, K_X\otimes L^{k+1})
\end{equation} is surjective. Since $H^0(X, K_X\otimes L^k)$, $H^0(X, K_X\otimes L^{k+1})$ and $H^0(X, L)$ are irreducible representations of $\fg$, the multiplication map is surjective if $H^0(X, K_X\otimes L^k)$ is not zero.
\end{proof}
Since the Hodge conjecture holds for generalized flag variety, we have
\begin{thm}
Under the same assumption in Theorem \ref{NL}, then Hodge conjecture for $H^{k,k}$ holds for hypersurfaces $Y_f$ outside a countable union of proper subvarieties in the linear system $|L|$.
\end{thm}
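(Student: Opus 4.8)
The plan is to deduce the statement immediately from Theorem \ref{NL} together with the classical fact that the Hodge conjecture holds for the ambient variety $X=G/P$. Fix $f$ in the complement of the countable union of proper subvarieties furnished by Theorem \ref{NL}, so that $Y_f$ is a smooth hypersurface of dimension $n-1=2k$ and, writing $i\colon Y_f\hookrightarrow X$ for the inclusion,
\[
H^{k,k}(Y_f,\QQ)=\Ima\{\,i^*\colon H^{k,k}(X,\QQ)\to H^{2k}(Y_f,\QQ)\,\}.
\]

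First I would recall why the Hodge conjecture holds for $X=G/P$: it has a cell decomposition by Schubert cells, so $H^{2\bullet}(X,\QQ)$ is concentrated in Hodge type $(\bullet,\bullet)$ and is spanned by the classes of Schubert subvarieties. In particular $H^{k,k}(X,\QQ)$ is spanned by classes $[Z]$ of irreducible closed subvarieties $Z\subset X$ of codimension $k$.

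Next I would transport these classes to $Y_f$. Given such a $Z$, since $G$ acts transitively on $X$ one may replace $Z$ by a general translate $gZ$ (or simply invoke the moving lemma on the smooth projective variety $X$) so that $Z$ meets the ample divisor $Y_f$ properly; then $Z\cap Y_f$ is an algebraic cycle of codimension $k$ on $Y_f$ whose cohomology class equals $i^*[Z]$. Hence every element of $\Ima(i^*|_{H^{k,k}(X,\QQ)})$ is a $\QQ$-linear combination of classes of algebraic subvarieties of $Y_f$, and by the displayed equality this exhausts $H^{k,k}(Y_f,\QQ)$. That is precisely the Hodge conjecture for $H^{k,k}$ on $Y_f$.

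Finally I would remark that the same reasoning in fact yields the full Hodge conjecture for $Y_f$: in degrees $2p\neq 2k$ the Lefschetz hyperplane theorem (for $p<k$) together with hard Lefschetz on $Y_f$ and the projection formula for $h=i^*c_1(L)$ (for $p>k$) gives $H^{2p}(Y_f,\QQ)=i^*H^{2p}(X,\QQ)$, so every rational Hodge class in those degrees is again the restriction of an algebraic class on $X$, hence algebraic on $Y_f$ by the moving-lemma step. The only genuinely nontrivial ingredient is Theorem \ref{NL} itself --- equivalently, the surjectivity of the multiplication map \eqref{multiplicationmap} combined with the infinitesimal Noether--Lefschetz argument of \cite{carlson2017period}; everything else is formal, and the single point requiring a little care is the proper-intersection step realizing $i^*[Z]$ by an honest cycle on $Y_f$, which is routine here since $X$ is homogeneous and cycles move freely.
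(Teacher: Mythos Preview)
Your proposal is correct and follows exactly the paper's approach: the paper's entire argument is the single sentence ``Since the Hodge conjecture holds for generalized flag variety, we have [the theorem],'' and you have supplied precisely this reduction together with the supporting details (Schubert cell decomposition of $G/P$ and the fact that $i^*$ preserves algebraicity of classes). Your moving-lemma step is fine, though it is not strictly needed---pullback on Chow groups of smooth varieties is defined and compatible with the cycle class map, so $i^*[Z]$ is automatically algebraic without arranging proper intersection.
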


\begin{rem}
The Hodge conjecture for very generic hypersurfaces in toric varieties with certain combinatorial property is proved in \cite{bruzzo2017hodge}. The proof reduces to the surjectivity of similar map \eqref{multiplicationmap} in toric Jacobian ring. We hope similar results hold for certain log homogenous varieties from the Jacobian ring in section \ref{log}.
\end{rem}

\bibliography{reference2.bib}
\bibliographystyle{abbrv}

\end{document}